\documentclass[11pt,a4paper]{article}

\usepackage{amsmath} 
\usepackage{mathtools} 
\usepackage{amssymb} 
\usepackage{stmaryrd} 
\usepackage{mathrsfs} 
\usepackage{amsthm}
\usepackage{microtype}

\usepackage[margin=3cm]{geometry}

\usepackage{todonotes}
\usepackage{latexsym}
\usepackage{xspace}

\usepackage{graphicx} 
\usepackage{float} 
\usepackage{caption} 
\usepackage{subcaption} 
\usepackage{enumitem}

\usepackage{siunitx} 
\usepackage{tikz}
\usepackage{tikz-cd}
\usepackage{blindtext} 
\usepackage{comment}
\usepackage{hyperref}
\hypersetup{hidelinks}
\usepackage{amsthm}
\usepackage{csquotes}

\usepackage{titlesec}

\usepackage{xcolor}
\IfFileExists{ulem.sty}{\usepackage[normalem]{ulem}}{} 

\theoremstyle{plain}
\newtheorem{theorem}{Theorem}[section]
\newtheorem{proposition}[theorem]{Proposition}
\newtheorem{lemma}[theorem]{Lemma}
\newtheorem{corollary}[theorem]{Corollary}

\theoremstyle{definition}

\newtheorem{remark}[theorem]{Remark}
\newtheorem{example}[theorem]{Example}

\newcommand{\vect}[1]{\text{Vect}}

\newcommand{\R}{\mathbb{R}}

\newcommand{\Q}{\mathbb{Q}}

\newcommand{\N}{\mathbb{N}}
\newcommand{\Z}{\mathbb{Z}}

\newcommand{\df}[1]{\,\mathrm{d}#1}

\newcommand{\norm}[1]{\|#1\|}

\newcommand{\scalp}[2]{\langle #1 \, , #2\rangle}

\newcommand{\OO}{\mathcal{O}}

\newcommand{\MSC}[1]{\begingroup\small\noindent\textbf{2020 Mathematics Subject Classification.} #1\par\smallskip\endgroup}
\newcommand{\Keywords}[1]{\begingroup\small\noindent\textbf{Keywords.} #1\par\endgroup}

\title{\textcolor{black}{Generic non-vanishing of Dirichlet eigenfunction averages and absence of symmetries}}
\author{Vincent Boulard\footnote{CERMICS, CNRS, École nationale des ponts et chaussées, Institut Polytechnique de Paris, Marne-la-Vall\'ee, France (\texttt{vincent.boulard@enpc.fr}).}}
\date{}
\begin{document}

\maketitle

\begin{abstract}
We prove that, for a generic $C^m$-domain
$\Omega \subset \R^d$ (with $m\geq3$, $d \geq 2$) in the sense of Baire category for the Micheletti
topology, every Dirichlet-Laplacian eigenfunction has nonzero average. This gives a standalone spectral-geometric answer to a question raised
by Steinerberger and Venkatraman. The proof is based on a direct
shape-perturbation argument.
A key ingredient, which appears to be of independent interest, is a Baire-category theorem for shapes: domains with real-analytic boundary form a meager subset of the space of $C^m$ domains. It is this input that allows us to bypass, rather than resolve pointwise, the overdetermined elliptic configurations in which the first shape derivative of the mean degenerates.
We then derive two consequences. First, from a geometric viewpoint, a generic
domain has trivial Euclidean isometry group. Second, in control theory, a generic domain makes the
Dirichlet heat equation approximately controllable and rapidly stabilizable,
that is, stabilizable at any prescribed exponential decay rate, by means of a
single spatially homogeneous scalar control.
\end{abstract}

{
\MSC{35P05, 58J50, 93B05.}
\Keywords{Dirichlet eigenfunctions, nonzero mean, generic domains, spectral geometry,
domain symmetries, isometry group, shape derivatives, real-analytic boundaries, Baire category, approximate controllability, rapid stabilization.}
}

\tableofcontents

\section{Introduction}
{
Let $\Omega \subset \R^d$ be a bounded domain with regular boundary, and let
$(\varphi_n)_{n \geq 1}$ be an $L^2$-orthonormal family of Dirichlet-Laplacian
eigenfunctions,
\[
-\Delta \varphi_n = \lambda_n \varphi_n \quad \text{in } \Omega,
\qquad \varphi_n|_{\partial\Omega}=0.
\]
This article is concerned with the spectral coefficients of the constant
function along the Dirichlet eigenbasis, namely
\[
 \forall n \geq 1, \quad \int_\Omega \varphi_n(x)\,\df x.
\]
Under Neumann boundary conditions the analogous question is trivial: the first
eigenfunction is constant and all the others have zero mean by orthogonality. In
the Dirichlet case the situation is very different. The first eigenfunction has
nonzero mean, but the remaining modes are not constrained by orthogonality to the
constants.

The vanishing or non-vanishing of these coefficients is a simple-looking
spectral question, but it reflects the geometry of the domain. Symmetric domains
show that many means can vanish. On the cube $[0,1]^d$, the eigenfunctions are
products $\prod_i \sin(a_i\pi x_i)$, and the integral vanishes as soon as one of
the integers $a_i$ is even. On the ball, separation of variables forces every
non-radial mode to have zero mean. More generally, symmetries produce invisible
modes by parity. In the absence of symmetries, Steinerberger and Venkatraman
suggested that one should expect every Dirichlet eigenfunction to have nonzero
average, and asked whether this holds generically
\cite{SteinerbergerVenkatraman2025}. They also proved that infinitely many
Dirichlet eigenfunctions have nonzero mean on every smooth domain, and obtained a
universal quantitative lower bound for their number among the first $n$ modes.

The purpose of the present paper is to isolate this non-vanishing phenomenon as a generic 
spectral-geometric property of domains and to derive consequences for the symmetries of 
domains and the controllability of the Dirichlet heat equation by a single spatially homogeneous scalar control. 
The restriction $d \geq 2$ is essential: in dimension one, every domain is an interval, 
and every even-indexed Dirichlet eigenfunction has zero mean.
\subsection{Main spectral result}

We work in the Micheletti framework. Let $D\subset\R^d$ be a bounded connected
open set of class $C^m$, and denote by $\OO^m(D)$ the space of domains obtained
from $D$ by $C^m$ diffeomorphisms asymptotic to the identity, endowed with the
Courant--Micheletti distance. This is a complete separable metric space
\cite{Micheletti1972,Henry2005}, hence a Baire space. We also denote by
$\mathcal S\subset\OO^m(D)$ the set of domains with simple Dirichlet spectrum,
which is residual by the classical generic simplicity theorems of Micheletti and
Uhlenbeck \cite{Micheletti1972,Uhlenbeck1976}.

For $\Omega\in\mathcal S$, each eigenfunction is determined up to sign, so the
condition $\int_\Omega \varphi_n\neq 0$ is well defined. We prove the following.{\footnote{A statement equivalent to Theorem~\ref{thm:main} is implicitly contained in the proof of a control-theoretic result of Chitour, Coron and Garavello \cite{ChitourCoronGaravello2006}. See Section~\ref{subsec:previous} for a detailed comparison of the statements as well as of the proofs.}}

\begin{theorem}\label{thm:main}
{
Let $D \subset \R^d$ be a bounded connected open set of class $C^m$
($m \geq 3$, $d \geq 2$). Then the set
\[
 \Bigl\{ \Omega \in \mathcal{S} : \forall n \geq 1,\ \int_\Omega \varphi_n(\Omega)  \neq 0 \Bigr\}
\]
is a dense $G_\delta$ set, and in particular a residual set, in $\OO^m(D)$.
}
\end{theorem}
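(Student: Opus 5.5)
Write $\mathcal S=\bigcap_N \mathcal S_N$, where $\mathcal S_N$ is the set of domains whose first $N$ Dirichlet eigenvalues are simple. Each $\mathcal S_N$ is open and dense by Micheletti--Uhlenbeck. Set
\[
\mathcal A_N=\Bigl\{\Omega\in\mathcal S_N : \int_\Omega\varphi_n(\Omega)\neq0 \text{ for } 1\le n\le N\Bigr\}.
\]
The set in Theorem~\ref{thm:main} is $\bigcap_N\mathcal A_N$. By Baire's theorem in the complete metric space $\OO^m(D)$, it suffices to show that every $\mathcal A_N$ is open and dense.

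\textbf{Openness.} On $\mathcal S_N$ the eigenvalues $\lambda_1,\dots,\lambda_N$ are simple. Pull each normalized eigenfunction back to the reference domain via the diffeomorphism. By standard Kato perturbation theory for isolated simple eigenvalues, the pullbacks depend continuously, up to sign, on the diffeomorphism in $C^m$. The map $\Omega\mapsto\bigl|\int_\Omega\varphi_n\bigr|$ is then continuous on $\mathcal S_N$, so the non-vanishing condition is open.

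\textbf{Density.} Argue by induction on $N$. Since $\mathcal A_N$ is open, $\mathcal A_{N+1}=\mathcal A_N\cap\{\Omega\in\mathcal S_{N+1}:\int\varphi_{N+1}\ne0\}$. It suffices to show the following: for $\Omega$ in the open dense set $\mathcal A_N\cap\mathcal S_{N+1}$, any neighbourhood of $\Omega$ contains a domain with $\int\varphi_{N+1}\ne0$. Suppose not. Then $M(\Omega')=\int_{\Omega'}\varphi_{N+1}(\Omega')$ vanishes identically on a neighbourhood $U$, so its shape derivative vanishes for every normal field $V\cdot\nu$ on $\partial\Omega$. Write $\varphi=\varphi_{N+1}$ and $\lambda=\lambda_{N+1}$. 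The Hadamard formula gives $\varphi'=-(V\cdot\nu)\partial_\nu\varphi$ on $\partial\Omega$ and $-\Delta\varphi'-\lambda\varphi'=\lambda'\varphi$. The boundary term from differentiating the domain vanishes because $\varphi=0$ there. Let $w$ solve $-\Delta w-\lambda w=1-\bigl(\int\varphi\bigr)\varphi$ with $w=0$ on $\partial\Omega$ and $w\perp\varphi$; then $w=(-\Delta-\lambda)^{-1}\Pi(1)$, which is well defined by simplicity. Integrating by parts,
\[
M'(\Omega)[V]=\int_{\partial\Omega}(V\cdot\nu)\,\partial_\nu\varphi\,\bigl(\partial_\nu w+c\,\partial_\nu\varphi\bigr)
\]
for some constant $c$ involving $\lambda'$ and $M=0$. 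Vanishing for all $V$ yields the overdetermined condition $\partial_\nu\varphi\,(\partial_\nu w+c\,\partial_\nu\varphi)=0$ on $\partial\Omega$. By unique continuation and Hopf-type arguments, $\partial_\nu\varphi$ cannot vanish on an open subset of the boundary. Hence $\partial_\nu w=-c\,\partial_\nu\varphi$ on a dense open part of $\partial\Omega$.

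\textbf{Main obstacle.} The difficulty is to exclude this overdetermined configuration. I would not try to rule it out directly. Instead I would use the fact that it holds at \emph{every} domain of the open set $U$. The function $u=w+c\varphi$ satisfies $(-\Delta-\lambda)u=\text{const}-\mu\varphi$ with $u=\partial_\nu u=0$ on $\partial\Omega$. Elliptic regularity for such overdetermined problems, via Kinderlehrer--Nirenberg-type partial hodograph arguments, forces $\partial\Omega$ to be real-analytic. So every domain in $U$ would have real-analytic boundary. The paper's Baire-category result states that real-analytic domains form a meager subset of $\OO^m(D)$, so by the Baire property they cannot contain a nonempty open set. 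This is a contradiction, and density follows. The delicate points are the regularity step, where $m\ge3$ is needed to make sense of second-order boundary data, and handling the points where $\partial_\nu\varphi$ vanishes.
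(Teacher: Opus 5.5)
Your proposal is correct and follows essentially the paper's proof. The ingredients are the same: the dual function $w$ (at a domain with $\int_\Omega\varphi=0$ it is exactly the paper's $\psi$), unique continuation giving the overdetermined problem $-\Delta w-\lambda w=1$ with $w=\partial_\nu w=0$ on $\partial\Omega$, Kinderlehrer--Nirenberg analyticity, and the fact that analytic-boundary domains have empty interior in $\OO^m(D)$; your induction on $N$ and your \emph{all of $U$ is analytic} contradiction just repackage the paper's two-case proof that each $Z_n$ has empty interior.

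The points you flag as delicate are harmless. On $U$ one has $c=-\int_\Omega w\varphi=0$ and $\mu=\int_\Omega\varphi=0$, so the right-hand side is exactly $1$ and the nondegeneracy $F(x_0,0,0,0)=1\neq0$ holds directly. The zeros of $\partial_\nu\varphi$ are handled simply by continuity of $\partial_\nu w$.
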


This places the nonvanishing of the mean within a long line of generic spectral
properties of the Dirichlet Laplacian. Generic simplicity of the spectrum, under
perturbation of the domain or of the metric, goes back to
\cite{Micheletti1972,Uhlenbeck1976} and Albert
\cite{Albert1975, Albert1978}, who also established that eigenfunctions are
generically Morse functions. The codimension and analytic-path structure
underlying such statements was clarified by Teytel \cite{Teytel1999}. Closer to
the present setting, Privat and Sigalotti showed that the squares
$\varphi_n^2$ are generically linearly independent and that the spectrum is
generically non-resonant \cite{PrivatSigalotti2010}, Pereira and Pereira proved the generic non-vanishing of the cubic integrals $\int_\Omega \varphi_n^3$ \cite{PereiraPereira2002}, while Ortega and Zuazua
developed the same circle of ideas, combining Baire category with shape
differentiation, for the plate equation \cite{OrtegaZuazua2000, OrtegaZuazua2003}.

\subsection{Generic triviality of the isometry group}

For a bounded regular connected open subset $\Omega \subset \R^d$ we write
\begin{equation*}
    \mathrm{Iso}(\Omega) := \{ S \in \mathrm{Isom}(\R^d) \, : \, S(\Omega) = \Omega \}
\end{equation*}
for its group of Euclidean symmetries.
A domain with a symmetry is a degenerate object, and one expects a generic domain
to have none. For Riemannian metrics on a fixed compact manifold, the corresponding statement is classical: metrics without isometries form an open dense set, as shown by Ebin \cite{Ebin1970} (see Mounoud \cite{Mounoud2015} for the pseudo-Riemannian case). We are not aware of a counterpart for Euclidean domains, where the perturbation acts on the shape rather than on the metric tensor. The spectral route below yields it as a byproduct of Theorem~\ref{thm:main}, together with quantitative information on the possible symmetry groups.

The natural attempt to prove this runs through the spectrum. Any
$S \in \mathrm{Iso}(\Omega)$ preserves the Lebesgue measure and commutes with the
Dirichlet Laplacian, so when $\lambda_n$ is simple it acts on the corresponding
eigenfunction by a sign,
\begin{equation}\label{eq:sign-character-intro}
    \varphi_n \circ S = \varepsilon_n(S)\, \varphi_n, \qquad \varepsilon_n(S) \in \{\pm 1\}.
\end{equation}
The eigenfunctions form a Hilbert basis of $L^2(\Omega)$, so $S$ is entirely determined by the collection of signs
$(\varepsilon_n(S))_{n \geq 1}$. One is tempted to conclude that all these signs
equal $+1$, and therefore that $S = \mathrm{Id}$.

This conclusion is false, and the simplicity of the spectrum is not what is
missing. Simplicity does constrain $\mathrm{Iso}(\Omega)$, but only to the extent
of forcing it to be an elementary abelian group isomorphic to $(\Z / 2 \Z)^k$, with $k\leq d$. This is shown in Proposition~\ref{prop:iso-structure}, which was proved by Ikeda for compact manifolds in \cite{Ikeda1985}, but the bound $k\leq d$ is due to the affine structure of the isometry group, hence it is new to our knowledge and sharp, see Example \ref{ex:rectangle}.

What genuinely obstructs the
argument is the possible presence of eigenfunctions of vanishing mean, and this
is exactly what Theorem~\ref{thm:main} removes for a generic domain.

\begin{theorem}\label{thm:isometry-generic}

Let $D \subset \R^d$ be a bounded connected open set of class $C^m$
($m \geq 3$, $d \geq 2$). Then the set of domains $\Omega\in\OO^m(D)$ such that
\[
\mathrm{Iso}(\Omega)=\{\mathrm{Id}\}
\]
is residual in $\OO^m(D)$.
\end{theorem}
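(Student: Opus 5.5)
The plan is to show that the residual set of Theorem~\ref{thm:main} is contained in the set of domains with trivial isometry group. Since a superset of a residual set is residual, this is enough. So fix $\Omega\in\mathcal S$ such that $\int_\Omega\varphi_n\neq0$ for every $n\geq1$, and let $S\in\mathrm{Iso}(\Omega)$. We want to prove that $S=\mathrm{Id}$.

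\textbf{Step 1: $S$ acts on each eigenfunction by a sign.} Because $S$ is a Euclidean isometry with $S(\Omega)=\Omega$, the map $u\mapsto u\circ S$ is unitary on $L^2(\Omega)$. It preserves $H^1_0(\Omega)$ and commutes with $-\Delta$. Hence $\varphi_n\circ S$ is again a normalized Dirichlet eigenfunction for $\lambda_n$. Since $\lambda_n$ is simple, this gives \eqref{eq:sign-character-intro}, namely $\varphi_n\circ S=\varepsilon_n(S)\varphi_n$ with $\varepsilon_n(S)\in\{\pm1\}$.

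\textbf{Step 2: all signs are $+1$.} Integrate this identity over $\Omega$ and change variables $y=S(x)$. This is legitimate because $S$ is a bijection of $\Omega$ with $|\det DS|=1$. We obtain
\[
\int_\Omega\varphi_n=\int_\Omega\varphi_n\circ S=\varepsilon_n(S)\int_\Omega\varphi_n .
\]
Since $\int_\Omega\varphi_n\neq0$, it follows that $\varepsilon_n(S)=1$ for all $n$.

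\textbf{Step 3: $S$ is the identity.} Every $\varphi_n$ is now $S$-invariant. The family $(\varphi_n)$ is a Hilbert basis of $L^2(\Omega)$, and $u\mapsto u\circ S$ is bounded and linear. Therefore $u\circ S=u$ for every $u\in L^2(\Omega)$.

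Suppose there were a point $x_0\in\Omega$ with $S(x_0)\neq x_0$. By continuity, there is a small ball $B\ni x_0$ with $B\subset\Omega$ and $S(B)\cap B=\emptyset$. Take $u=\mathbf 1_B$. Then $u\circ S=\mathbf 1_{S^{-1}(B)}$, which vanishes on $B$, whereas $u=1$ on $B$. This is a contradiction. Hence $S$ fixes every point of the open set $\Omega$. An affine isometry that is the identity on a nonempty open set is the identity, so $S=\mathrm{Id}$.

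\textbf{Main obstacle.} There is essentially no analytic obstacle beyond Theorem~\ref{thm:main}. The only points needing care are the measure-preservation step and the final passage from "acts trivially on $L^2$" to "$S=\mathrm{Id}$", and both are elementary. The argument also does not need Proposition~\ref{prop:iso-structure}. That proposition only clarifies why simplicity of the spectrum alone does not suffice.
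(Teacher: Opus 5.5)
Your proof is correct and follows the paper's route. The paper places the residual set $\mathcal R$ of Theorem~\ref{thm:main} inside the trivial-isometry set using the same three ingredients: the sign characters $\varepsilon_n$, integration with the measure-preserving change of variables, and injectivity of $S\mapsto(\varepsilon_n(S))_{n}$ (the first part of Proposition~\ref{prop:iso-structure}, which you reprove inline with $\mathbf 1_B$ in place of a compactly supported test function); as you observe, the $(\Z/2\Z)^k$ structure is not needed.
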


Although Theorem~\ref{thm:isometry-generic} follows from the spectral result by
a short argument, we state it as a main result because it gives an intrinsic
spectral-geometric payoff: the same condition that makes the constant function
visible to every eigenmode also forbids hidden Euclidean symmetries of the
domain.

\subsection{Meagerness of analytic boundaries}\label{subsec:meager-intro}

{The proof of Theorem~\ref{thm:main} leans on a third result, of a different nature, which is the pivot of our approach.}

\begin{theorem}\label{thm:meager}
{Let $D \subset \R^d$ be a bounded connected open set of class $C^m$
($m \geq 3$, $d \geq 2$). The set of domains $\Omega \in \OO^m(D)$ whose
boundary is a real-analytic hypersurface is meager in $\OO^m(D)$.}
\end{theorem}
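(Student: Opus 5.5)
Write the analytic-boundary set $\mathcal A\subset\OO^m(D)$ as a countable union of closed sets with empty interior. Real-analyticity of a compact hypersurface is a quantitative condition. Fix once and for all a finite atlas adapted to $D$. Each $\Omega\in\OO^m(D)$ has boundary $\partial\Omega=F(\partial D)$, with $F$ a $C^m$ diffeomorphism close to the identity. Locally near each point $p\in\partial\Omega$, $\partial\Omega$ is a graph $x_d=g(x')$ over the tangent hyperplane on a ball of radius $r$. I then set $\mathcal A_{k}$ to be the set of domains such that, for every $p\in\partial\Omega$, the boundary near $p$ is a graph over a ball of radius $1/k$ around $p'$ in some coordinate frame (with the frame varying in a fixed countable dense set of rotations, or simply all rotations), with
\[
\sup_{|x'-p'|<1/k}|\partial^\alpha g(x')|\le k^{|\alpha|+1}\,\alpha! \quad\text{for all multi-indices }\alpha .
\]
By compactness of $\partial\Omega$ and the Cauchy estimates characterization of real-analytic functions, every $\Omega\in\mathcal A$ lies in some $\mathcal A_k$, so $\mathcal A\subset\bigcup_k\mathcal A_k$. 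It then suffices to show that each $\mathcal A_k$ is nowhere dense, for instance by showing that its closure has empty interior.

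\textbf{Closedness.} Let $\Omega_j\to\Omega$ in the Micheletti distance with $\Omega_j\in\mathcal A_k$. Then the boundaries converge in $C^m$ (the parametrizations $F_j\to F$ in $C^m$), so the local graph functions $g_j$ converge in $C^m$ on slightly smaller balls, uniformly in the base point after passing to subsequences of base points and frames. The uniform Cauchy bounds imply that the $g_j$ extend holomorphically to a fixed complex neighbourhood of radius $\sim 1/(dk)$ and are uniformly bounded there. By Montel's theorem they converge locally uniformly together with all derivatives, and the limit satisfies the same bounds on balls of radius $1/k-\varepsilon$ for every $\varepsilon$. Hence the limit satisfies them on the open ball of radius $1/k$. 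This step needs some care, since the graph domains move with $p$ and with the frame, and the limit frame has to be extracted by compactness of $SO(d)$. Closing up with a limiting argument on radii, or replacing $1/k$ by a closed-ball version, should handle it, so $\mathcal A_k$ is closed.

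\textbf{Empty interior.} This is the step I expect to carry the real content, though it is still elementary. Given $\Omega=F(D)\in\mathcal A_k$ and $\delta>0$, I perturb with $\Omega_\varepsilon=(\mathrm{Id}+\varepsilon\theta)(\Omega)$. Here $\theta$ is a $C^m$ vector field supported in a small ball around a boundary point $p$, with $\theta\cdot\nu$ a bump of class $C^m$ but not $C^{m+1}$, or even $C^\infty$ with compact support and nonzero. For small $\varepsilon$, $\Omega_\varepsilon$ lies within $\delta$ of $\Omega$ in the Micheletti metric and is still in $\OO^m(D)$. The new boundary coincides with $\partial\Omega$ on a nonempty relatively open set but differs from it somewhere. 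If $\partial\Omega_\varepsilon$ were real-analytic, let alone in $\mathcal A_k$, then the unique continuation of analytic hypersurfaces from the common open piece, propagated along the connected component of $\partial\Omega$ containing $p$, would force the two components to coincide, which is a contradiction. Choosing a compactly supported $C^\infty$ bump therefore already gives a non-analytic boundary arbitrarily close to $\Omega$.

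The only subtlety is connectedness: the perturbation must be supported in a ball meeting a single boundary component and must leave an open part of that component fixed. Arranging this is immediate by taking the ball small. So every $\mathcal A_k$ is closed with empty interior, $\mathcal A$ is contained in a countable union of such sets, and $\mathcal A$ is meager, as claimed in Theorem~\ref{thm:meager}.
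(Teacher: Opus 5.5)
Your argument is correct, but it is organized differently from the paper's.

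\textbf{The paper's route.} The paper never builds an $F_\sigma$ cover of $\mathcal A$ inside $\OO^m(D)$. Around each analytic domain it uses the normal-graph chart $\sigma\mapsto\Omega(\sigma)$, a homeomorphism from a ball of $C^m(\partial\Omega)$ onto a neighbourhood of $\Omega$. It shows by the implicit function theorem that $\partial\Omega(\sigma)$ is analytic iff $\sigma$ is (Lemma~\ref{lem:param-ana}). It then proves that even $C^{m+1}(M)$ is meager in $C^m(M)$: the closed sets $F_n$ of functions with $n$-Lipschitz $m$-th derivatives are destroyed by adding a small cut-off multiple of $|w_1|^{m+\beta}$. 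Finally it uses that meagerness is a local property.

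\textbf{Your route.} You stratify $\mathcal A$ directly by uniform Cauchy bounds. Closedness of each stratum comes from compactness of $O(d)$, Hausdorff convergence of the boundaries (which Micheletti convergence gives), and Arzel\`a--Ascoli. Empty interior comes from the rigidity of connected analytic hypersurfaces rather than from a loss of regularity.

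\textbf{Points to state explicitly.}
\begin{itemize}
\item Frames must range over all of $O(d)$. A countable dense set would break the limit-frame step.
\item $\mathcal A_k\subset\mathcal A$ holds by invariance of domain, since a graph over an open ball lying in $\partial\Omega$ is open in $\partial\Omega$. Your unique-continuation step needs this, because it uses analyticity of $\partial\Omega$ itself.
\item The uniform $k$ in $\mathcal A\subset\bigcup_k\mathcal A_k$ comes from reusing the frame and graph chosen at $p$ for every $q$ with $|q'-p'|<1/(2k_p)$, at the cost $k_p\mapsto 2k_p$. Then cover $\partial\Omega$ by finitely many such neighbourhoods.
\item Montel is unnecessary. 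The bounds already give $C^\infty_{\mathrm{loc}}$ compactness and pass to pointwise limits on the open ball, so no radius adjustment is needed.
\end{itemize}

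\textbf{What each route buys.} The paper works in a linear Banach space, where closedness of $F_n$ is a one-line check and the scalar lemma is stronger than needed. Your route dispenses with the chart lemma and the localization theorem and uses only $C^\infty$ bumps. It therefore rests on quasi-analyticity rather than on the finite regularity $m$. It would also give meagerness of analytic domains in a $C^\infty$ shape topology, where the $C^{m+1}$-versus-$C^m$ mechanism is unavailable. The price is a more delicate closedness argument with moving base points and frames.
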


{The role of Theorem~\ref{thm:meager} in this paper is to dispose of the degenerate case of the shape derivative, described in the next subsection. Its proof, given in Section~\ref{sec:meager}, is elementary: a normal-graph chart reduces the analyticity of a perturbed boundary to that of a scalar function on a fixed compact manifold, and a Baire-category argument shows that analytic functions, and in fact $C^{m+1}$ functions already, form a meager subset of $C^m$. The statement itself, however, does not seem to have been recorded in the literature. Since it carries the proof of Theorem~\ref{thm:main} and is independent of the particular shape functional under study, hence reusable for other generic properties of spectral shape functionals, we single it out as a standalone result.}

{Theorem~\ref{thm:meager} also quantifies a natural intuition. Domains with analytic boundary are dense in $\OO^m(D)$, by Whitney's theorem on analytic approximation of submanifolds \cite{Whitney1936}, yet a generic $C^m$ domain is not analytic, in the same way as a generic continuous function is nowhere differentiable.}

\subsection{Relation with previous work and shape-derivative degeneracy}\label{subsec:previous}

The proof of Theorem~\ref{thm:main} is based on a shape-derivative argument. If
$\psi$ solves
\[
-\Delta\psi-\lambda_n\psi=1,\qquad \psi|_{\partial\Omega}=0,
\]
then differentiating the average $\Omega\mapsto\int_\Omega\varphi_n$ leads to
the boundary kernel
\[
K_\Omega = \partial_\nu\varphi_n\,\partial_\nu\psi.
\]
When $K_\Omega$ does not vanish identically, one can perturb the domain so as to
make the mean nonzero. The degenerate case $K_\Omega\equiv0$ can be interpreted
as an overdetermined elliptic configuration: the auxiliary function $\psi$ has
both zero Dirichlet and zero Neumann data on the boundary. This is reminiscent of
Schiffer-type phenomena related to Serrin's theorem and to the Schiffer
conjecture \cite{Serrin1971,Berenstein1980,Dalmasso1999,Yau1982,FallMinlendWeth2024}. 

In our proof, we use the structural consequence that such
a degeneracy forces the boundary to be real-analytic, by the regularity theorem
of Kinderlehrer and Nirenberg \cite{KinderlehrerNirenberg1977}. {Theorem~\ref{thm:meager} then disposes of this exceptional case in the Baire-category argument.}

{Let us now describe precisely the relation with the existing literature. The question was recently raised explicitly by Steinerberger and Venkatraman \cite{SteinerbergerVenkatraman2025}. An affirmative generic answer is in fact contained, implicitly, in the proof of a broader theorem of Chitour, Coron and Garavello on controllability by spatially homogeneous boundary controls \cite{ChitourCoronGaravello2006}: the content of Theorem~\ref{thm:main} can be extracted from the proof of \cite[Theorem~2]{ChitourCoronGaravello2006}, in the regularity and topological framework adapted to their control problem, but it is not isolated there as a standalone spectral-geometric statement. One aim of the present paper is to extract this phenomenon, make it directly citable in the Courant--Micheletti space $\OO^m(D)$, and record its geometric and control-theoretic consequences. Neither the analysis of the sign characters of Euclidean symmetries and the generic triviality of $\mathrm{Iso}(\Omega)$ (Theorem~\ref{thm:isometry-generic}), nor the meagerness of analytic boundaries (Theorem~\ref{thm:meager}), nor the positive controllability and stabilization results of Section~\ref{sec:control}, appear in \cite{ChitourCoronGaravello2006}.}

For Theorem~\ref{thm:main} itself, the genuine novelty lies in the proof. Both the
present argument and that of \cite{ChitourCoronGaravello2006} reduce the problem
to the possible complete degeneracy of the first shape derivative, namely
$K_\Omega \equiv 0$. The approaches then diverge. In
\cite{ChitourCoronGaravello2006}, this degeneracy is excluded on each individual
domain by a slicing argument exploiting the simplicity of the eigenvalue. Here,
instead, we interpret $K_\Omega \equiv 0$ as an overdetermined elliptic problem
of Schiffer type, which implies analyticity of the boundary by the theorem of
Kinderlehrer and Nirenberg. Theorem~\ref{thm:meager} then removes these
exceptional domains by Baire category. Although the pointwise exclusion obtained
in \cite{ChitourCoronGaravello2006} is a stronger intermediate statement, the
present approach reaches the same generic conclusion through a mechanism that is
independent of the particular functional
$\Omega \mapsto \int_\Omega \varphi_n$ and therefore potentially applicable to
other generic spectral shape problems. We return to this comparison in
Remark~\ref{rem:fork}, at the precise point where the two proofs diverge.

A statement equivalent to
Theorem~\ref{thm:main} is asserted by McDonald and Meyers
\cite[Theorem~4.1]{McDonaldMeyers2003}, in the context of the moment spectrum,
the family of pairings $\int_\Omega \varphi_n$ that govern the coupling of each
mode to a spatially uniform source. Their proof proceeds by shape differentiation
and reaches an integral identity, but the final transversality step, in the
paragraph following their~(4.7), infers the non-vanishing of that integral from
the non-vanishing of a two-variable kernel, and thereby overlooks a possible
cancellation occurring after integration over the boundary. We
provide a complete and self-contained proof in which this degeneracy is
identified and then bypassed through Theorem~\ref{thm:meager}.

\subsection{Applications}

Beyond its spectral-geometric interest, Theorem~\ref{thm:main} settles a natural
question in the control of parabolic equations, developed in
Section~\ref{sec:control}. Consider the heat equation on $\Omega$ driven by a
single scalar control acting through the fixed, spatially homogeneous profile
$\mathbf{1}_\Omega$,
\[
\partial_t z = \Delta z + f(t)\,\mathbf{1}_\Omega, \qquad
z|_{\partial\Omega} = 0, \qquad z(0) = z_0.
\]
{For such a degenerate rank-one input, the spectral coefficients of the control profile determine which modes can be reached. In particular, the natural notion considered here is approximate controllability rather than the stronger null-controllability properties available for genuinely distributed controls.} By the
Fattorini--Hautus criterion, in the form established by Badra and Takahashi
\cite{Fattorini1966, BadraTakahashi2014}, the system
is approximately controllable if and only if
$\int_\Omega \varphi_n \neq 0$ for every $n$, which is exactly the condition
supplied by Theorem~\ref{thm:main}. We thus obtain that, for a generic
domain, the Dirichlet heat equation is approximately controllable, and
{rapidly stabilizable (at any prescribed exponential decay rate)}, by a single spatially homogeneous scalar control
(Corollary~\ref{cor:control} and Corollary~\ref{cor:rapid-stabilization}). The same condition appears for a spatially homogeneous scalar boundary control,
through Green's formula. Consequently, this stands in contrast with a sharp negative
result of Chitour, Coron and Garavello \cite{ChitourCoronGaravello2006} for the
stronger notion of steady-state controllability, and places this theorem within a
sustained program on domain-dependent controllability, to which
Section~\ref{sec:control} returns. 

The same spectral coefficients also arise in photonics, which was in fact the
original motivation of Steinerberger and Venkatraman
\cite{SteinerbergerVenkatraman2025}. In the analysis of the epsilon-near-zero
resonant cavities introduced by Liberal, Mahmoud and Engheta \cite{LiberalMahmoudEngheta2016}, Kohn and Venkatraman \cite{KohnVenkatraman2024} showed that
the limiting resonances are governed by the Dirichlet eigenfunctions with
nonzero average. Consequently, while it is known that infinitely many such
modes always exist, Theorem~\ref{thm:main} implies that, for a generic
dielectric inclusion, every Dirichlet mode contributes, so no admissible
resonance is lost.

\subsection{Organization of the paper}

{Section~\ref{sec:setting} introduces the Micheletti topology, recalls the needed
analytic perturbation facts, performs the shape-derivative computation, and
proves Theorem~\ref{thm:main}, taking Theorem~\ref{thm:meager} for granted.
Section~\ref{sec:meager} proves Theorem~\ref{thm:meager}.
Section~\ref{sec:isometries} proves the spectral criterion for absence
of Euclidean symmetries and deduces Theorem~\ref{thm:isometry-generic}. Finally, Section~\ref{sec:control}
collects the applications to scalar parabolic control.}
}

\section{{Setting and proof of the main theorem}}\label{sec:setting}
\subsection{Setting}
Let $d\geq 2$ and $D \subset \R^d$ be a bounded connected open set of class $C^m$ with $m \geq 3$. We denote by $\text{Diff}^m(\R^d)$ the group of $C^m$-diffeomorphisms of $\R^d$ and by $C_0^m(\R^d,\R^d)$ the space of $C^m$-vector fields on $\R^d$ that vanish at infinity for the $C^m$ norm, defined by
\begin{equation*}
    \norm{f}_{C^m} := \sum_{|\alpha| \leq m} \sup_{x \in \R^d} |\partial^\alpha f(x)|.
\end{equation*}
We will also define 
\begin{equation*}
    \text{Diff}^m_0(\R^d) := \{ f \in \text{Diff}^m(\R^d) : f - \text{id} \in C_0^m(\R^d,\R^d) \},
\end{equation*}
which is a subgroup of $\text{Diff}^m(\R^d)$. We now define $\OO^m(D)$ as the set of all domains $\Omega \subset \R^d$ such that $\Omega=f(D)$ for some $f \in \text{Diff}^m_0(\R^d)$. We endow it with the Courant--Micheletti distance, defined by
{\begin{equation*}
    d(\Omega_1,\Omega_2) := \inf_{f_i \in \text{Diff}^m_0(\R^d),\, N\geq 1} \{ \sum_{i=1}^N \norm{f_i - \text{id}}_{C^m}+\norm{f_i^{-1} - \text{id}}_{C^m} : \, f_N \circ \cdots \circ f_1(\Omega_1) = \Omega_2 \}.
\end{equation*}}
This distance was first introduced by Micheletti \cite{Micheletti1972}, where it is also proved that $\OO^m(D)$ is complete and separable. We set $\Theta^m = C^m(\overline{D};\R^d)$. The map 
\begin{equation*}
\theta \in \Theta^m \mapsto (\mathrm{id} + \theta)(D) \in \OO^m(D),
\end{equation*}
is well defined for $\theta$ small enough, and it is continuous with a local right inverse near $D$, see Henry \cite{Henry2005}. Since $\OO^m(\Omega)=\OO^m(D)$ for every $\Omega \in \OO^m(D)$, we may speak of smooth or analytic maps locally defined on $\OO^m(D)$ by considering their pullback under the above map.

\medskip

For $\Omega \in \OO^m(D)$, we denote by $0<\lambda_1(\Omega) \leq \lambda_2(\Omega) \leq \cdots$ the eigenvalues of the Dirichlet Laplacian on $\Omega$, and we define the set of domains with simple $n$-th eigenvalue as
\[
\mathcal{S}_n := \{ \Omega \in \OO^m(D) : \lambda_n(\Omega) \text{ is simple} \}, \qquad \mathcal{S} := \bigcap_{n \geq 1} \mathcal{S}_n.
\]
It is well known that $\mathcal{S}_n$ is open and dense in $\OO^m(D)$, see Uhlenbeck \cite{Uhlenbeck1976} and Micheletti \cite{Micheletti1972}, hence $\mathcal{S}$ is residual. Moreover, for every $n \geq 1$ and every $\Omega \in \mathcal{S}_n$, setting 
$\Omega_\theta :=(\mathrm{id} + \theta)(\Omega)$ for small $\theta \in C^m(\overline{\Omega};\R^d)$ (so that $\Omega_\theta \in \mathcal{S}_n$, as $\mathcal{S}_n$ is open), 
the maps $\theta \mapsto \lambda_n(\Omega_\theta)$ and $\theta \mapsto \varphi_n(\Omega_\theta)\circ (\mathrm{id}+\theta) \in H^2(\Omega)\cap H^1_0(\Omega)$ are analytic near $\theta = 0$, where $\varphi_n(\Omega_\theta)$ is an $L^2$-normalized eigenfunction associated with $\lambda_n(\Omega_\theta)$, again by \cite{Henry2005}.
\begin{remark}\label{rem:continuity}
    On $\mathcal{S}_n$, the map $\Omega \mapsto \int_\Omega \varphi_n(\Omega)$ is not well defined, since the eigenfunction $\varphi_n(\Omega)$ is only determined up to sign. The map $M_n : \Omega \mapsto \left|\int_\Omega \varphi_n(\Omega)\right|$, however, is well defined on $\mathcal{S}_n$ and continuous, by the analytic dependence of the eigenpair recalled above.
\end{remark}
\paragraph{Proof strategy} For each $n\geq 1$, we define 
\begin{equation*}
    Z_n := \{ \Omega \in \mathcal{S}_n : \int_\Omega \varphi_n(\Omega) = 0 \}.
\end{equation*}
We will show that $Z_n$ is closed in $\mathcal{S}_n$ and has empty interior. Since $\mathcal{S}_n$ is open and dense in $\OO^m(D)$, the set $\mathcal{S}_n \setminus Z_n$ is then open and dense, and Baire's theorem implies that $\mathcal{R} = \bigcap_{n \geq 1} (\mathcal{S}_n \setminus Z_n)$ is residual in $\OO^m(D)$.
To prove that $Z_n$ has empty interior, we use a shape-derivative argument: we show that when the derivative vanishes in every direction, {the dual function} $\psi$ {(introduced in Lemma~\ref{lem:psi} below)} solves an overdetermined elliptic problem, and a theorem of Kinderlehrer and Nirenberg \cite{KinderlehrerNirenberg1977} then forces the boundary to be analytic. {Theorem~\ref{thm:meager}, proved in Section~\ref{sec:meager}, then ensures that these domains form a meager subset of $\OO^m(D)$, which completes the argument.}

\subsection{Shape derivative computation}\label{subsec:shape}
The aim of this subsection is to compute the shape derivative of the map $\Omega \mapsto \int_\Omega \varphi_n(\Omega)$ and to express it as a boundary integral. This yields a simple geometric criterion for the degeneracy of the derivative, namely the vanishing of a kernel $K_\Omega$ on $\partial\Omega$, on which the whole argument rests.

In this subsection we fix $\Omega \in Z_n$ and $V\in C^m(\overline{\Omega};\R^d)$. Then, we set 
\[
\Omega_t := T_t(\Omega), \quad T_t := \mathrm{id} + tV, \quad t \in (-\varepsilon, \varepsilon),
\]
with $\varepsilon >0$ small enough such that $T_t$ can be extended to $\R^d$ and $T_t \in \text{Diff}^m_0(\R^d)$, for every $t \in (-\varepsilon, \varepsilon)$. 
Hence on this interval, we have $\Omega_t \in \mathcal{S}_n$ and we can define $\lambda_n(t) := \lambda_n(\Omega_t)$ and $\varphi_n(t) := \varphi_n(\Omega_t)\circ T_t \in H^2(\Omega)\cap H^1_0(\Omega)$, which are analytic functions of $t$ (for $\varphi_n(t)$, as it is only determined up to sign, this means that we fix locally an analytic branch).

Denote by $E : H^1_0(\Omega) \to H^1(\R^d)$ the extension-by-zero operator, set $\tilde{\varphi}_n(t) := E(\varphi_n(t)) \in H^1(\R^d)$, and let $u_n(t) \in L^2(\R^d)$ be the extension by zero of $\varphi_n(\Omega_t)$. 
Since the map
\begin{equation*}
    (t,w) \in (-\varepsilon, \varepsilon) \times H^1(\R^d) \mapsto w \circ T_t^{-1} \in L^2(\R^d),
\end{equation*}
is $C^1$, see \cite[Chapter 3]{Henry2005}, and since $u_n(t)= \tilde{\varphi}_n(t) \circ T_t^{-1}$, the map $t \mapsto u_n(t) \in L^2(\R^d)$ is $C^1$, and we may define 
\begin{align*}
    \dot \varphi_n &= \frac{\df}{\df t}\bigr|_{t=0} \tilde{\varphi}_n(t) \in H^1(\R^d), \\
    \varphi_n' &= \frac{\df}{\df t}\bigr|_{t=0} u_n(t) \in L^2(\R^d).
\end{align*}
Using $\frac{\df}{\df t}\bigr|_{t=0} T_t^{-1}=-V$ and the chain rule, a standard computation gives
\begin{equation}\label{eq:form_d}
    \varphi'_n = \dot \varphi_n - \nabla \tilde{\varphi}_n \cdot V.
\end{equation}

We may now compute the shape derivative of the mean. Since $\varphi_n$ vanishes on $\partial\Omega$, the boundary term in Hadamard's formula for the derivative of a domain integral drops out, and the latter reduces to a pure interior contribution, see Henrot and Pierre \cite[Chapter 5]{HenrotPierre2018}:
\begin{equation}\label{eq:derivative_mean}
\frac{\df}{\df t}\bigr|_{t=0} \int_{\Omega_t} \varphi_n(\Omega_t)= \int_\Omega \varphi_n' .
\end{equation}

\begin{proposition}\label{prop:phi_prime}
Under the above notations, we have $\varphi_n' \in H^1(\Omega)$ and 
\begin{equation}\label{eq:phi_prime}
\begin{cases}
-\Delta \varphi_n' - \lambda_n \varphi_n' = \lambda_n' \, \varphi_n & \text{in } \Omega, \\
\varphi_n'\bigr|_{\partial\Omega} = -(V \cdot \nu)\, \partial_\nu \varphi_n & \text{on } \partial\Omega, \\
\int_\Omega \varphi_n \varphi_n' =0,
\end{cases}
\end{equation}
where $\nu$ is the outward unit normal to $\partial\Omega$.
\end{proposition}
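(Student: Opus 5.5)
We derive \eqref{eq:phi_prime} from the fixed-domain description of the eigenpair and then translate through \eqref{eq:form_d}. Since $\varphi_n(t)\in H^2(\Omega)\cap H^1_0(\Omega)$ depends analytically on $t$, we have $\dot\varphi_n\in H^2(\Omega)\cap H^1_0(\Omega)$ (restricted to $\Omega$). Moreover $\varphi_n\in H^2(\Omega)$, so $\nabla\varphi_n\cdot V\in H^1(\Omega)$ because $V\in C^m$ with $m\ge 3$. Hence $\varphi_n'|_\Omega=\dot\varphi_n-\nabla\varphi_n\cdot V\in H^1(\Omega)$.

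\textbf{Boundary condition.} The trace of $\dot\varphi_n$ vanishes, since each $\varphi_n(t)\in H^1_0(\Omega)$ and the derivative is taken in $H^1_0(\Omega)$. On $\partial\Omega$ we have $\varphi_n=0$, so $\nabla\varphi_n=(\partial_\nu\varphi_n)\nu$. Therefore $\varphi_n'|_{\partial\Omega}=-(V\cdot\nu)\partial_\nu\varphi_n$, understood in the trace sense. This uses $\nabla\varphi_n\in H^1$, so its trace lies in $H^{1/2}(\partial\Omega)$.

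\textbf{Interior equation.} We avoid differentiating the pulled-back operator $T_t^{*}\Delta$, which is messy, and argue locally in the moving domain. Fix a test function $\chi\in C_c^\infty(\Omega)$. For $|t|$ small, $\operatorname{supp}\chi\subset\Omega_t$, and $-\Delta u_n(t)=\lambda_n(t)u_n(t)$ holds in the distribution sense on a neighborhood of $\operatorname{supp}\chi$. Thus
\[
\int u_n(t)\,(-\Delta\chi)=\lambda_n(t)\int u_n(t)\,\chi .
\]
Both sides are $C^1$ in $t$, because $t\mapsto u_n(t)\in L^2(\R^d)$ is $C^1$ and $\lambda_n$ is analytic. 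Differentiating at $t=0$ gives $-\Delta\varphi_n'-\lambda_n\varphi_n'=\lambda_n'\varphi_n$ in $\mathcal D'(\Omega)$. Here $\lambda_n'$ denotes the derivative of $\lambda_n(t)$.

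\textbf{Orthogonality.} We differentiate the normalization $\int_{\R^d}u_n(t)^2=1$ in $L^2(\R^d)$, which gives $\int_{\R^d}u_n(0)\,\varphi_n'=0$. Since $u_n(0)$ vanishes outside $\Omega$, this equals $\int_\Omega\varphi_n\varphi_n'=0$.

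\textbf{Main obstacle.} The delicate point is the regularity bookkeeping. First, the $C^1$ differentiability of $t\mapsto u_n(t)$ in $L^2(\R^d)$ requires $\tilde\varphi_n(t)\in H^1(\R^d)$, which the extension by zero provides. Second, the restriction of $\varphi_n'$ to $\Omega$ must coincide with the $H^1$ expression obtained from \eqref{eq:form_d}, so that the trace is meaningful; near $\partial\Omega$, $\nabla\tilde\varphi_n$ jumps across the boundary. Both facts follow from the analytic dependence quoted from \cite{Henry2005} and from elliptic $H^2$ regularity on $C^m$ domains.
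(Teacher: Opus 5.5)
Your proposal is correct and follows essentially the same route as the paper: you read off the $H^1$ regularity and the boundary trace from \eqref{eq:form_d} using $\nabla\varphi_n=(\partial_\nu\varphi_n)\nu$ on $\partial\Omega$, you obtain the interior equation by differentiating the weak eigenvalue identity against a fixed test function supported in $\Omega_t$, and you get the orthogonality condition by differentiating $\|u_n(t)\|_{L^2(\R^d)}^2=1$. You are slightly more explicit than the paper in noting that $\dot\varphi_n$ restricts to an element of $H^2(\Omega)\cap H^1_0(\Omega)$, which is exactly what makes its trace vanish.
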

\begin{proof}
    From \eqref{eq:form_d}, since $\varphi_n \in H^2(\Omega)$ and ${\nabla \tilde{\varphi}_n}_{|\Omega} = \nabla \varphi_n$, we have $\varphi_n' \in H^1(\Omega)$. Taking the boundary trace of \eqref{eq:form_d} and using that $\nabla \varphi_n =\partial_\nu \varphi_n \, \nu$ on $\partial\Omega$ (as $\varphi_n$ vanishes there), we obtain
    \begin{equation*}
        {\varphi_n'}_{|\partial\Omega} = -(V \cdot \nu)\, \partial_\nu \varphi_n.
    \end{equation*}
    Finally, since $u_n(t)$ is an eigenfunction of the Dirichlet Laplacian on $\Omega_t$ with eigenvalue $\lambda_n(t)$, for all $\phi \in C^\infty_c(\Omega)$ and $|t|$ small enough that $\mathrm{supp} \, \phi \subset \Omega_t$, we have
    \begin{equation*}
        \scalp{u_n(t)}{\Delta \phi}_{L^2(\R^d)} +\lambda_n(t) \scalp{u_n(t)}{\phi}_{L^2(\R^d)} = 0.
    \end{equation*}
    Differentiating at $t=0$ gives the first line of \eqref{eq:phi_prime}. Since $u_n(t)$ is $L^2$-normalized for all $t$, differentiating $\norm{u_n(t)}_{L^2(\R^d)}^2=1$ at $t=0$ gives the last line.
\end{proof}
In order to turn the interior integral \eqref{eq:derivative_mean} into a boundary integral in $V \cdot \nu$, we introduce an adjoint (dual) function $\psi$. Its existence is not automatic: by the Fredholm alternative, the forcing term $1$ must be orthogonal to $\ker(-\Delta - \lambda_n) = \mathrm{span}\,\varphi_n$, that is, $\int_\Omega \varphi_n = 0$. This is precisely the assumption $\Omega \in Z_n$, and it is the only place in the argument where it is used.
\begin{lemma}[Dual function]\label{lem:psi}
The problem
\begin{equation}\label{eq:psi}
\begin{cases}
-\Delta \psi - \lambda_n \psi = 1 & \text{in } \Omega, \\
\psi = 0 & \text{on } \partial\Omega, \\
\int_\Omega \varphi_n \psi \df x = 0
\end{cases}
\end{equation}
admits a unique solution $\psi \in H^2(\Omega) \cap H^1_0(\Omega)$. Furthermore, $\psi \in C^{2,\alpha}(\overline\Omega)$ for all $\alpha \in (0,1)$.
\end{lemma}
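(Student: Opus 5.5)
The plan is to apply the Fredholm alternative to the self-adjoint operator $A := -\Delta - \lambda_n$ on $L^2(\Omega)$ with domain $H^2(\Omega)\cap H^1_0(\Omega)$, and then to obtain the Hölder regularity from Schauder theory. Since $\Omega$ is of class $C^m$ with $m\ge 3$, in particular $C^{2}$, the Dirichlet Laplacian has compact resolvent and domain $H^2(\Omega)\cap H^1_0(\Omega)$. Hence $A$ is Fredholm of index zero, with closed range $\operatorname{Ran}A = (\ker A)^{\perp}$. Because $\Omega\in\mathcal S_n$, the eigenvalue is simple and $\ker A = \operatorname{span}\,\varphi_n$.

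\textbf{Existence and uniqueness.} The constant function $1$ lies in $L^2(\Omega)$, since $\Omega$ is bounded. The hypothesis $\Omega\in Z_n$ gives $\langle 1,\varphi_n\rangle = 0$, so $1 \in \operatorname{Ran}A$ and there is some $\psi_0\in H^2\cap H^1_0$ with $A\psi_0 = 1$. The full solution set is $\psi_0 + \R\varphi_n$. Imposing $\int_\Omega \varphi_n\psi = 0$ selects exactly one element, namely $\psi = \psi_0 - \bigl(\int_\Omega\varphi_n\psi_0\bigr)\varphi_n$, using $\norm{\varphi_n}_{L^2}=1$. Uniqueness follows because the difference of two solutions lies in $\ker A\cap \varphi_n^{\perp} = \{0\}$.

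\textbf{Regularity.} This step requires a bootstrap, which is where the only care is needed. The function $\psi\in H^2(\Omega)\cap H^1_0(\Omega)$ solves $-\Delta\psi = 1+\lambda_n\psi$ with zero boundary data. By Sobolev embedding and $L^p$ elliptic regularity (Agmon--Douglis--Nirenberg, valid on $C^{1,1}$ domains), $\psi\in W^{2,p}(\Omega)$ for increasing $p$ in finitely many steps, hence for all $p<\infty$. Morrey's embedding then gives $\psi\in C^{1,\alpha}(\overline\Omega)$ for every $\alpha\in(0,1)$. The right-hand side $1+\lambda_n\psi$ is therefore in $C^{0,\alpha}(\overline\Omega)$. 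Since $\partial\Omega$ is $C^m\subset C^{2,\alpha}$ (because $m\ge3$) and the boundary data vanish, the global Schauder estimates (Gilbarg--Trudinger, Theorem 6.14 together with the uniqueness of $H^1_0$ solutions) give $\psi\in C^{2,\alpha}(\overline\Omega)$ for every $\alpha\in(0,1)$.

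Alternatively, one can bypass $\lambda_n$ in Schauder theory by applying Theorem 6.14 to $-\Delta\psi = f$ with $f = 1+\lambda_n\psi$ treated as given data. This reduces the argument to the Poisson problem and identifies the classical solution with $\psi$ through $H^1_0$ uniqueness. I expect no real obstacle; the key point is only that the boundary regularity $m\ge 3$ exceeds the $C^{2,\alpha}$ threshold needed for global Schauder estimates.
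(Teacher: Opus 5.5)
Your proof is correct and follows the paper's route. Existence comes from the Fredholm alternative using $\int_\Omega\varphi_n=0$, uniqueness from the simplicity of $\lambda_n$ together with the constraint $\int_\Omega\varphi_n\psi=0$, and the $C^{2,\alpha}$ regularity from Schauder theory. Your $W^{2,p}$ bootstrap up to $C^{1,\alpha}(\overline\Omega)$, followed by Gilbarg--Trudinger Theorem 6.14 on the Poisson problem $-\Delta\psi=1+\lambda_n\psi$ and identification of the solution through $H^1_0$ uniqueness, spells out the regularity step more carefully than the paper, which gives it as a single citation of the Schauder estimates.
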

\begin{proof}
    By the Fredholm alternative, the first two lines of \eqref{eq:psi} admit a solution if and only if {$1 \perp \ker(-\Delta - \lambda_n) = \mathrm{span}\, \varphi_n$}, which is equivalent to $\int_\Omega \varphi_n = 0$. 
    Hence, as $\Omega \in Z_n$, there exists a solution $\psi \in H^2(\Omega) \cap H^1_0(\Omega)$ to the first two lines of \eqref{eq:psi}. Now, as $\lambda_n$ is simple, the kernel of $-\Delta - \lambda_n$ is one-dimensional and generated by $\varphi_n$, hence the condition $\int_\Omega \varphi_n \psi \df x = 0$ ensures the uniqueness of the solution. 
    Finally, as $\Omega$ is of class $C^m$ with $m\geq 3$, we have $\psi \in C^{2,\alpha}(\overline\Omega)$ for all $\alpha \in (0,1)$ by the classical Schauder estimates of Gilbarg and Trudinger \cite[Theorem 6.15]{gilbarg2001elliptic}.
\end{proof}
\begin{proposition}\label{prop:key_formula}
Let $\psi$ be the dual function given by Lemma \ref{lem:psi}. Then we have
\begin{equation}\label{eq:key_formula}
\frac{\df}{\df t}\bigr|_{t=0} \int_{\Omega_t} \varphi_n(\Omega_t) = \int_{\partial\Omega} (V \cdot \nu)\, \partial_\nu \varphi_n \, \partial_\nu \psi \df \sigma.
\end{equation}
\end{proposition}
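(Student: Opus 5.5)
We start from \eqref{eq:derivative_mean}, which gives the derivative as $\int_\Omega \varphi_n'$, and rewrite the constant $1$ via the dual function of Lemma~\ref{lem:psi}: $1 = -\Delta\psi - \lambda_n\psi$. This yields
\[
\int_\Omega \varphi_n' = \int_\Omega \varphi_n'\,(-\Delta\psi - \lambda_n\psi)\df x .
\]
We then apply Green's second identity to the pair $(\varphi_n',\psi)$ on $\Omega$:
\[
\int_\Omega \varphi_n'(-\Delta\psi-\lambda_n\psi) = \int_\Omega (-\Delta\varphi_n'-\lambda_n\varphi_n')\,\psi + \int_{\partial\Omega}\bigl(\psi\,\partial_\nu\varphi_n' - \varphi_n'\,\partial_\nu\psi\bigr)\df\sigma .
\]
By Proposition~\ref{prop:phi_prime}, the interior term equals $\lambda_n'\int_\Omega\varphi_n\psi$, which vanishes by the normalization in \eqref{eq:psi}; this is where the orthogonality condition on $\psi$ is used. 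On the boundary, $\psi=0$ kills the first term. Substituting the trace $\varphi_n' = -(V\cdot\nu)\partial_\nu\varphi_n$ into the second term gives
\[
-\int_{\partial\Omega}\varphi_n'\,\partial_\nu\psi = \int_{\partial\Omega}(V\cdot\nu)\,\partial_\nu\varphi_n\,\partial_\nu\psi\df\sigma ,
\]
which is \eqref{eq:key_formula}.

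The main obstacle is justifying Green's formula, since Proposition~\ref{prop:phi_prime} only gives $\varphi_n'\in H^1(\Omega)$ and hence $\partial_\nu\varphi_n'$ has no classical trace. We avoid this by using the first Green identity twice in weak form, never taking the normal derivative of $\varphi_n'$.

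First, because $\psi\in C^{2,\alpha}(\overline\Omega)$ and $\varphi_n'\in H^1(\Omega)$, the identity
\[
\int_\Omega \varphi_n'(-\Delta\psi) = \int_\Omega\nabla\varphi_n'\cdot\nabla\psi - \int_{\partial\Omega}\varphi_n'\,\partial_\nu\psi
\]
is valid. The boundary term makes sense because the trace of $\varphi_n'$ lies in $H^{1/2}(\partial\Omega)$ and $\partial_\nu\psi$ is continuous.

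Second, we need
\[
\int_\Omega\nabla\varphi_n'\cdot\nabla\psi = \int_\Omega(-\Delta\varphi_n')\,\psi .
\]
This follows from the distributional equation in \eqref{eq:phi_prime}, tested against $\psi\in H^1_0(\Omega)$. Testing is legitimate because $-\Delta\varphi_n' = \lambda_n\varphi_n' + \lambda_n'\varphi_n \in L^2(\Omega)$ and $C_c^\infty(\Omega)$ is dense in $H^1_0(\Omega)$, so the identity extends from smooth test functions to $\psi$.

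If the boundary trace identification in Proposition~\ref{prop:phi_prime} needs more care, we fall back on \eqref{eq:form_d} directly. Writing $\varphi_n' = \dot\varphi_n - \nabla\varphi_n\cdot V$ with $\dot\varphi_n\in H^1_0(\Omega)$, the term $\dot\varphi_n$ contributes no boundary term. The remaining term $\nabla\varphi_n\cdot V\in H^1(\Omega)$ has trace $(V\cdot\nu)\partial_\nu\varphi_n$, and this gives the same formula.
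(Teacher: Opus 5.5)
Your proof is correct and follows the paper's route: you write $1=-\Delta\psi-\lambda_n\psi$ inside \eqref{eq:derivative_mean}, apply Green's formula, and read off the boundary term from the trace of $\varphi_n'$ given by Proposition~\ref{prop:phi_prime}. You also supply two details the paper leaves implicit, since it cites only the first two lines of \eqref{eq:psi}: a weak-form justification of Green's formula that never uses $\partial_\nu\varphi_n'$, and the observation that the interior term $\lambda_n'\int_\Omega\varphi_n\psi$ vanishes only because of the normalization $\int_\Omega\varphi_n\psi=0$.
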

\begin{proof}
    Using \eqref{eq:derivative_mean} and the first line of \eqref{eq:psi}, we have
    \begin{equation*}
        \frac{\df}{\df t}\bigr|_{t=0} \int_{\Omega_t} \varphi_n(\Omega_t) = \int_\Omega (-\Delta \psi - \lambda_n \psi) \varphi_n' .
    \end{equation*}
    Then, applying Green's formula, using the fact that $\psi$ satisfies the first two lines of \eqref{eq:psi}, and using \eqref{eq:phi_prime}, we get
    \begin{align*}
       \frac{\df}{\df t}\bigr|_{t=0} \int_{\Omega_t} \varphi_n(\Omega_t) &=\int_{\Omega} \psi (-\Delta \varphi_n' - \lambda_n \varphi_n') + \int_{\partial\Omega} \psi \partial_\nu \varphi_n' - \varphi_n' \partial_\nu \psi \df \sigma,\\
       &= - \int_{\partial\Omega} \varphi_n' \partial_\nu \psi \df \sigma,\\
        &= \int_{\partial\Omega} (V \cdot \nu)\, \partial_\nu \varphi_n \, \partial_\nu \psi \df \sigma.
    \end{align*}
\end{proof}

Now that the shape derivative is given by the explicit boundary integral \eqref{eq:key_formula}, we look for a simple criterion for its vanishing across all admissible perturbations. Since the perturbation enters only through the scalar $V \cdot \nu$ on $\partial\Omega$, this criterion reduces to a purely geometric quantity on $\partial\Omega$, namely the kernel
\begin{equation}\label{eq:K_Omega}
    K_\Omega := \partial_\nu \varphi_n \, \partial_\nu \psi \in C^1(\partial\Omega).
\end{equation}
The regularity of $K_\Omega$ is a direct consequence of Schauder estimates, as $\varphi_n$ and $\psi$ both lie in $C^{2,\alpha}(\overline\Omega)$ for all $\alpha \in (0,1)$.

The following corollary makes precise the sense in which $K_\Omega$ captures the degeneracy of the shape derivative.
\begin{corollary}\label{cor:K-zero}
We have $K_\Omega \equiv 0$ if and only if $\frac{\df}{\df t}\bigr|_{t=0} \int_{\Omega_t} \varphi_n(\Omega_t) = 0$ for all $V \in C^m(\overline{\Omega};\R^d)$.
\end{corollary}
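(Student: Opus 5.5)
The plan is to prove both directions of the equivalence directly from the boundary formula \eqref{eq:key_formula} of Proposition~\ref{prop:key_formula}, which expresses the derivative as the linear functional
\[
V \longmapsto \int_{\partial\Omega} (V\cdot\nu)\, K_\Omega \df\sigma .
\]

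The implication ``$K_\Omega\equiv 0 \Rightarrow$ derivative vanishes for all $V$'' is immediate: if the kernel vanishes identically on $\partial\Omega$, the integrand in \eqref{eq:key_formula} is zero for every $V \in C^m(\overline{\Omega};\R^d)$.

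For the converse, assume the derivative vanishes for every $V$. I would show that the normal components $V\cdot\nu$ of admissible fields exhaust a dense class of functions on $\partial\Omega$.

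1. Since $\partial\Omega$ is of class $C^m$, the normal $\nu$ is only $C^{m-1}$, so I would first choose a field $N\in C^{m}(\R^d;\R^d)$ that is compactly supported and satisfies $N\cdot\nu>0$ on $\partial\Omega$. Such a field exists: mollify a $C^{m-1}$ extension of $\nu$ and use the compactness of $\partial\Omega$ to keep $N\cdot\nu$ bounded below.
2. Given any $g\in C^\infty(\R^d)$, I take $V=g\,N$. This field lies in $C^m(\overline\Omega;\R^d)$, and on the boundary $V\cdot\nu = g\,(N\cdot\nu)$.
3. The hypothesis then gives $\int_{\partial\Omega} g\,(N\cdot\nu)\,K_\Omega \df\sigma=0$ for every smooth $g$.
4. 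Restrictions to $\partial\Omega$ of smooth functions are dense in $C^0(\partial\Omega)$, by Stone--Weierstrass or via polynomials. Moreover $(N\cdot\nu)K_\Omega$ is continuous, since $K_\Omega\in C^1(\partial\Omega)$ by \eqref{eq:K_Omega}. Hence $(N\cdot\nu)K_\Omega\equiv0$ on $\partial\Omega$.
5. Dividing by the positive function $N\cdot\nu$ gives $K_\Omega\equiv0$.

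The only step needing care is step 1, the construction of a $C^m$ field with $N\cdot\nu>0$ despite the loss of one derivative in $\nu$. I would handle it by approximating $\nu$ uniformly on $\partial\Omega$ by a smooth field, which is enough because only positivity of $N\cdot\nu$ is needed, not $N=\nu$. Alternatively, one can use the localizing right inverse of Henry mentioned in the setting.

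It also has to be checked that each such $V$ is admissible, i.e. that $T_t=\mathrm{id}+tV$ extends to an element of $\mathrm{Diff}^m_0(\R^d)$ for small $t$. This holds because $V$ is $C^m$ with compact support, so $\|tV\|_{C^1}$ is small and $T_t$ is a $C^m$ diffeomorphism asymptotic to the identity.
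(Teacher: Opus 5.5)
Your proof is correct and is essentially the paper's argument: both start from \eqref{eq:key_formula}, replace the merely $C^{m-1}$ normal by a mollified smooth field, multiply by arbitrary smooth test functions, and conclude by density. The only difference is that the paper lets the mollification parameter tend to zero so that $V_\varepsilon\cdot\nu\to\chi$ uniformly on $\partial\Omega$, whereas you fix a single transverse field $N$ with $N\cdot\nu>0$ and divide by this positive weight at the end, which avoids the limiting step.
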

\begin{proof}
Thanks to \eqref{eq:key_formula}, we have 
\begin{equation*}
\frac{\df}{\df t}\bigr|_{t=0} \int_{\Omega_t} \varphi_n(\Omega_t) = \int_{\partial\Omega} (V \cdot \nu)\, K_\Omega \df \sigma.
\end{equation*}
Hence if $K_\Omega \equiv 0$, then $\frac{\df}{\df t}\bigr|_{t=0} \int_{\Omega_t} \varphi_n(\Omega_t) = 0$ for all $V \in C^m(\overline{\Omega};\R^d)$. Conversely, assume this derivative vanishes for all $V \in C^m(\overline{\Omega};\R^d)$, and let ${\chi} \in C^\infty_c(\R^d)$ be a bump function whose support meets $\partial\Omega$. Since $\partial\Omega$ is of class $C^m$ with $m \geq 3$, we have $\nu \in C^{m-1}(\partial\Omega;\R^d)$, and we extend it to a $C^{m-1}$ vector field $\tilde{\nu}$ on $\R^d$, supported in a neighborhood of $\overline{\Omega}$ and equal to $\nu$ on $\partial\Omega$.

Let $\rho_\varepsilon$ be a standard mollifier and set $V_\varepsilon := {\chi} \left(\rho_\varepsilon * \tilde{\nu}\right)$. Then ${V_\varepsilon}_{|\overline{\Omega}} \in C^m(\overline{\Omega};\R^d)$ and $V_\varepsilon \cdot \nu \to {\chi}$ uniformly on $\partial\Omega$ as $\varepsilon \to 0$, so that
\begin{equation*}
    0 = \int_{\partial\Omega} (V_\varepsilon \cdot \nu)\, K_\Omega \df \sigma \underset{\varepsilon \rightarrow 0}{\longrightarrow} \int_{\partial\Omega} {\chi}\, K_\Omega \df \sigma.
\end{equation*} 
Thus $\int_{\partial\Omega} {\chi}\, K_\Omega \df \sigma = 0$ for every such ${\chi}$, hence $K_\Omega \equiv 0$.
\end{proof}

\subsection{Overdetermined elliptic problem}

At this point one might hope for a direct argument that $Z_n$ has empty interior: it would suffice to show that, for every $\Omega \in Z_n$, the shape derivative is nonzero in some direction, that is, $K_\Omega \not\equiv 0$. 
This, however, is not the route we follow. Indeed, we show in Lemma~\ref{lem:psi-overdetermined} that $K_\Omega \equiv 0$ forces the dual function $\psi$ to solve an \emph{overdetermined} elliptic problem with constant forcing, with both Dirichlet and Neumann data vanishing on $\partial\Omega$. This is reminiscent of Schiffer-type problems, see \cite{Yau1982,Dalmasso1999,Williams1981}, but it is used here only as a way to read the degenerate case of the shape derivative. The structural information we need is simply that $K_\Omega \equiv 0$ forces $\partial\Omega$ to be real-analytic (Proposition~\ref{prop:analytic_boundary}). {Theorem~\ref{thm:meager} shows that domains with analytic boundary form a meager subset of $\OO^m(D)$: they cannot fill any open set, and the density argument goes through all the same.}

\begin{lemma}\label{lem:psi-overdetermined}
    Let $\Omega \in Z_n$ such that $K_\Omega$ defined by \eqref{eq:K_Omega} is identically zero. Then $\psi$ is a solution to the overdetermined elliptic problem
    \begin{equation}\label{eq:psi-overdetermined}
\begin{cases}
-\Delta \psi - \lambda_n \psi = 1 & \text{in } \Omega, \\
\psi = 0 & \text{on } \partial\Omega, \\
\partial_\nu \psi = 0 & \text{on } \partial\Omega.
\end{cases}
\end{equation}
\end{lemma}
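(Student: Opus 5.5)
The first two lines of \eqref{eq:psi-overdetermined} are exactly those of \eqref{eq:psi}, so the plan is to prove only that $\partial_\nu\psi\equiv 0$ on $\partial\Omega$. Since $K_\Omega=\partial_\nu\varphi_n\,\partial_\nu\psi\equiv 0$, the function $\partial_\nu\psi$ must vanish on the set $U:=\{x\in\partial\Omega:\partial_\nu\varphi_n(x)\neq 0\}$. This set is relatively open in $\partial\Omega$, because $\partial_\nu\varphi_n$ is continuous ($\varphi_n\in C^{2,\alpha}(\overline\Omega)$). Since $\partial_\nu\psi$ is continuous as well, it then suffices to show that $U$ is dense in $\partial\Omega$. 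In that case $\partial_\nu\psi=0$ on $\overline U=\partial\Omega$.

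Density of $U$ comes from unique continuation, applied to $\varphi_n$. Assume, to the contrary, that $\partial_\nu\varphi_n$ vanishes on a nonempty relatively open subset $\Gamma\subset\partial\Omega$. Then $\varphi_n$ has zero Cauchy data on $\Gamma$: $\varphi_n=0$ and $\partial_\nu\varphi_n=0$ there. I will extend $\varphi_n$ by zero across $\Gamma$ into a small ball $B$ centered at a point of $\Gamma$, with $B\setminus\overline\Omega$ on the other side. Both traces vanish, and $\varphi_n\in H^2$ near $\Gamma$. Hence the extension $\tilde\varphi$ lies in $H^2(\Omega\cup B)$, and it solves $-\Delta\tilde\varphi=\lambda_n\tilde\varphi$ weakly in $\Omega\cup B$. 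Weakly is enough: no distributional boundary terms appear, since both the jump of $\tilde\varphi$ and the jump of its normal derivative are zero on $\Gamma$. Since $\tilde\varphi$ vanishes on the open set $B\setminus\overline\Omega$, the weak unique continuation principle for $-\Delta-\lambda_n$ forces $\tilde\varphi\equiv0$ on the connected set $\Omega\cup B$. Here I use that $\Omega$ is connected, being the image of the connected set $D$ by a diffeomorphism. So $\varphi_n\equiv0$, which contradicts $\|\varphi_n\|_{L^2}=1$.

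The main obstacle is this unique continuation step. It needs a little care: $\Gamma$ has to be chosen so that a small ball is split by the $C^m$ hypersurface into exactly two pieces, and the $H^2$ gluing has to be checked. Both are routine locally, for instance in a graph chart of $\partial\Omega$. Alternatively, one can invoke Holmgren-type uniqueness for the Cauchy problem, or the unique continuation from Cauchy data available for $C^{1,1}$ boundaries. The remaining step, continuity of $\partial_\nu\psi$ together with the density argument, is immediate from Lemma~\ref{lem:psi}.
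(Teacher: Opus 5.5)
Your proposal is correct and follows the paper's proof. Both show that $\{\partial_\nu\varphi_n=0\}$ has empty interior in $\partial\Omega$ by unique continuation from zero Cauchy data, and then conclude that $\partial_\nu\psi=0$ on a dense set, hence everywhere by continuity. The only difference is that you prove the boundary unique continuation step yourself, by extending $\varphi_n$ by zero across $\Gamma$ as an $H^2$ solution and applying interior unique continuation on the connected set $\Omega\cup B$, whereas the paper cites Isakov's boundary unique continuation theorem directly.
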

\begin{proof}
First, we show that $J\subset \partial\Omega$, the set where $\partial_\nu \varphi_n$ vanishes, has empty interior.
Assume by contradiction that $J$ has non-empty interior. 
{On a relatively open boundary patch contained in $J$, the eigenfunction has both Cauchy data $\varphi_n=0$ and $\partial_\nu\varphi_n=0$. 
By boundary unique continuation for elliptic equations, see Isakov \cite[Theorem 3.3.1]{Isakov2017}, this forces $\varphi_n$ to vanish in all $\Omega$, which is a contradiction.} Hence, as $K_\Omega$ vanishes identically, $\partial_\nu \psi = 0$ on a dense subset of $\partial\Omega$, and by continuity of $\partial_\nu \psi$ we conclude $\partial_\nu \psi = 0$ on all of $\partial\Omega$.
\end{proof}

\begin{proposition}\label{prop:analytic_boundary}
    Let $\Omega \in Z_n$ such that $K_\Omega$ defined by \eqref{eq:K_Omega} is identically zero. Then $\Omega$ has an analytic boundary.
\end{proposition}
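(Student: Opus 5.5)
We will combine Lemma~\ref{lem:psi-overdetermined} with the Kinderlehrer--Nirenberg regularity theorem for free boundaries \cite{KinderlehrerNirenberg1977}. By Lemma~\ref{lem:psi-overdetermined}, $\psi$ solves the overdetermined problem \eqref{eq:psi-overdetermined}. By Lemma~\ref{lem:psi}, $\psi \in C^{2,\alpha}(\overline\Omega)$, and $\partial\Omega$ is of class $C^m$ with $m\geq 3$. The equation $-\Delta\psi - \lambda_n\psi = 1$ has analytic coefficients, so by interior elliptic regularity $\psi$ is real-analytic in $\Omega$.

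The main step is to bring the problem into the form required by the Kinderlehrer--Nirenberg theorem. That theorem asks for a function $u\in C^2(\Omega\cup\Gamma)$, where $\Gamma\subset\partial\Omega$ is a $C^1$ portion of the boundary, satisfying a nonlinear elliptic equation $F(x,u,Du,D^2u)=0$ with $F$ analytic. It also asks for the Cauchy data $u=0$ and $\partial_\nu u=0$ on $\Gamma$, and for some second derivative of $u$ to be nonzero on $\Gamma$. The conclusion is that $\Gamma$ is analytic. Here we take $F(x,u,p,M) = -\mathrm{tr}\,M - \lambda_n u - 1$, which is analytic and linear, and $u=\psi$.

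The nondegeneracy is verified as follows. Since $\psi=0$ and $\nabla\psi=\partial_\nu\psi\,\nu = 0$ on $\partial\Omega$, every tangential and mixed second derivative of $\psi$ vanishes there. The Hessian on the boundary therefore reduces to $\partial^2_{\nu\nu}\psi\;\nu\otimes\nu$. Evaluating the equation on $\partial\Omega$ gives $-\Delta\psi = 1$, so $\partial^2_{\nu\nu}\psi = -1 \neq 0$ at every boundary point.

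To compute the Hessian precisely, we work in a local $C^m$ boundary chart, or note that $\psi$ and $\nabla\psi$ vanish identically on the $C^{2}$ hypersurface $\partial\Omega$, so tangential derivatives of $\nabla\psi$ vanish. In the theorem's local form, near each $x_0\in\partial\Omega$ the set $\Omega$ lies on one side of the hypersurface $\partial\Omega$, and $\psi$ has second derivative $\partial^2_{\nu\nu}\psi\neq0$ there.

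The theorem then yields that $\partial\Omega$ is real-analytic in a neighbourhood of each $x_0\in\partial\Omega$. By compactness of $\partial\Omega$, the whole boundary is a real-analytic hypersurface.

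The expected obstacle is matching the precise hypotheses of \cite{KinderlehrerNirenberg1977}, namely the required regularity of $\Gamma$ and of $u$ up to $\Gamma$, and the nondegeneracy condition. These are covered by $m\geq3$ and the Schauder regularity of Lemma~\ref{lem:psi}. If a version requiring $u_{x_d}\neq0$ or a sign condition is used, we would instead apply it to $\partial_\nu$-type derivatives, or use the hodograph formulation with the nonvanishing normal second derivative established above.
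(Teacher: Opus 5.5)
Your proposal is correct and takes essentially the same route as the paper: you combine Lemma~\ref{lem:psi-overdetermined} with the Schauder regularity $\psi\in C^{2,\alpha}(\overline\Omega)$ of Lemma~\ref{lem:psi}, then apply the local Theorem~1$'$ of \cite{KinderlehrerNirenberg1977} at each $x_0\in\partial\Omega$ to the analytic linear operator. The only difference is cosmetic: you check nondegeneracy through $\partial^2_{\nu\nu}\psi=-1$ on $\partial\Omega$, whereas the paper checks the equivalent condition (1.11) directly as $F(x_0,0,0,0)=1\neq0$, and your interior-analyticity and compactness steps are not needed.
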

\begin{proof}
    By Lemma \ref{lem:psi-overdetermined}, $\psi$ solves the overdetermined elliptic problem \eqref{eq:psi-overdetermined}, and it is of class $C^2(\overline\Omega)$. 
    {The statement of \cite[Theorem~1$'$]{KinderlehrerNirenberg1977} is local, so fix $x_0 \in \partial\Omega$. The function $\psi$ satisfies $F(x, \psi, \nabla \psi, \nabla^2 \psi)=0$ in $\Omega$, with
    \[
    F(x,z,p,M):=\operatorname{tr}M+\lambda_n z+1.
    \]
    This operator is uniformly elliptic and analytic in all its arguments. Near $x_0$, the hypotheses of \cite[Theorem~1$'$]{KinderlehrerNirenberg1977} are satisfied: the boundary $\partial\Omega$ is a $C^1$ hypersurface, $\psi\in C^2(\overline\Omega)$ locally near $x_0$, and $\psi$ has zero Cauchy data on $\partial\Omega$ by \eqref{eq:psi-overdetermined}. Finally, the nondegeneracy condition (1.11) of \cite{KinderlehrerNirenberg1977} holds because
    \[
    F(x_0,0,0,0)=1\neq0.
    \]
    Assertion (iii) of \cite[Theorem~1$'$]{KinderlehrerNirenberg1977} therefore yields that $\partial\Omega$ is real-analytic in a neighborhood of $x_0$. Since $x_0\in\partial\Omega$ was arbitrary, $\partial\Omega$ is real-analytic.}
\end{proof}
\begin{remark}
    This proposition is where the hypothesis $m \geq 3$ is used. 
    Theorem 1' of \cite{KinderlehrerNirenberg1977} requires the solution to be of class $C^2$ up to the boundary. Since $\partial\Omega \in C^m$ with $m \geq 3$, 
    in particular $\partial\Omega \in C^{2,\alpha}$, the {elliptic regularity} of Lemma~\ref{lem:psi} {yields} $\psi \in C^{2,\alpha}(\overline\Omega) \subset C^2(\overline\Omega)$, exactly the regularity needed. A merely $C^2$ boundary (i.e. $m = 2$) would in general only provide $\psi \in C^{1,\alpha}(\overline\Omega)$, which falls short of the hypotheses of \cite{KinderlehrerNirenberg1977}.
\end{remark}

\begin{remark}[Comparison with \cite{ChitourCoronGaravello2006}]\label{rem:fork}
{It is precisely at this stage that the present proof departs from that of \cite{ChitourCoronGaravello2006}. There, the analogue of the configuration $K_\Omega \equiv 0$ is ruled out on every domain with simple $n$-th eigenvalue, through a slicing argument. Here it is not ruled out on any fixed domain: Proposition~\ref{prop:analytic_boundary} converts it into boundary analyticity, and Theorem~\ref{thm:meager} shows that this can only occur on a meager set of domains.}
\end{remark}

\subsection{Conclusion}
We now have all the ingredients to prove Theorem~\ref{thm:main}. The shape-derivative computation of Section~\ref{subsec:shape} reduces the empty-interior property of $Z_n$ to a dichotomy governed by the kernel $K_\Omega$. {The only nontrivial input left is the meagerness of domains with analytic boundary, Theorem~\ref{thm:meager}, which we take for granted here and prove in Section~\ref{sec:meager}.}

\begin{proof}[Proof of Theorem \ref{thm:main}]
Fix $n \geq 1$. As explained above, it suffices to show that $Z_n$ is closed in $\mathcal{S}_n$ and has empty interior. 
Closedness is immediate, since $Z_n = M_n^{-1}(\{0\})$ with $M_n$ the continuous map of Remark \ref{rem:continuity}.

We now show that $Z_n$ has empty interior. Let $\Omega \in Z_n$ and denote by $\mathcal{A} \subset \mathcal{O}^m(D)$ the set of domains with analytic boundary. We distinguish two cases.
\begin{itemize}
 \item If $\Omega \notin \mathcal{A}$, then by the contrapositive of Proposition \ref{prop:analytic_boundary} the kernel $K_\Omega$ of \eqref{eq:K_Omega} is not identically zero. By Corollary \ref{cor:K-zero}, there exists $V \in C^m(\overline{\Omega};\R^d)$ such that $\frac{\df}{\df t}\bigr|_{t=0} \int_{\Omega_t} \varphi_n(\Omega_t) \neq 0$. Since $t \mapsto \int_{\Omega_t} \varphi_n(\Omega_t)$ is analytic, for $t \neq 0$ small enough we have
 \begin{equation*}
     \int_{\Omega_t} \varphi_n(\Omega_t) = t \frac{\df}{\df s}\bigr|_{s=0} \int_{\Omega_s} \varphi_n(\Omega_s) + O(t^2) \neq 0.
 \end{equation*}
 As $t\mapsto \Omega_t$ is continuous for the topology of $\OO^m(D)$, every open neighborhood $U$ of $\Omega$ in $\mathcal{S}_n$ contains some $\Omega_t$ with $\int_{\Omega_t} \varphi_n(\Omega_t) \neq 0$, hence $\Omega_t \notin Z_n$.
 \item If $\Omega \in \mathcal{A}$, let $U$ be an open neighborhood of $\Omega$ in $\mathcal{S}_n$. Since $\mathcal{A}$ is meager in $\OO^m(D)$ by {Theorem~\ref{thm:meager}}, there exists $\Omega' \in U$ with $\Omega' \notin \mathcal{A}$. If $\Omega' \notin Z_n$, we are done. Otherwise $\Omega' \in Z_n \setminus \mathcal{A}$ falls into the first case{: applied to $\Omega'$ and to the open set $U$, which is also a neighborhood of $\Omega'$, it provides $\Omega'' \in U$ with $\Omega'' \notin Z_n$}.
\end{itemize}
Thus $Z_n$ has empty interior in $\mathcal{S}_n$. Consequently $\mathcal{S}_n \setminus Z_n$ is open and dense in $\OO^m(D)$, and Baire's theorem implies that $\mathcal{R} = \bigcap_{n \geq 1} (\mathcal{S}_n \setminus Z_n)$ is residual in $\OO^m(D)$.
\end{proof}

\section{{Meagerness of analytic boundaries}}\label{sec:meager}
{This section is devoted to the proof of Theorem~\ref{thm:meager}: the set $\mathcal{A} \subset \OO^m(D)$ of domains with real-analytic boundary is meager.} The argument is local. It proceeds in two steps. First, in a normal-graph chart around a domain with analytic boundary, the analyticity of the perturbed boundary $\partial\Omega(\sigma)$ is equivalent to the analyticity of the scalar graph function $\sigma$ (Lemma~\ref{lem:param-ana}). 
This transfers the problem to a statement about scalar functions on a fixed compact manifold. Second, on any smooth compact manifold $M$, the space $C^{m+1}(M)$---and \emph{a fortiori} $C^\omega(M)$---is meager in $C^m(M)$ (Lemma~\ref{lem:Cm-meager}).

\medskip

\subsection{Reduction to a scalar graph function}

Let $\Omega \in \OO^m(D)$ have analytic boundary. Then the exterior normal satisfies $\nu \in C^\omega(\partial\Omega;\R^d)$ and admits a smooth extension to $\R^d$, and by \cite[Appendix 2]{Henry2005}, for $\sigma \in C^m(\partial\Omega)$ small enough in $C^m$ norm we may define a domain $\Omega(\sigma)$ with boundary
\begin{equation*}
    \partial \Omega(\sigma) := \{ x + \sigma(x)\, \nu(x) : x \in \partial\Omega \}.
\end{equation*}
Moreover, the map $\sigma \mapsto \Omega(\sigma)$, defined in a neighborhood of $0$ in $C^m(\partial\Omega)$, is a homeomorphism onto a neighborhood of $\Omega$ in $\OO^m(D)$.

\begin{lemma}\label{lem:param-ana}
There exists $\varepsilon >0$ small enough such that for every $\sigma \in C^m(\partial\Omega)$ with $\norm{\sigma}_{C^m} < \varepsilon$, we have
\begin{equation*}
    {\partial \Omega(\sigma) \text{ is real-analytic}} \iff \sigma \in C^\omega(\partial\Omega).
\end{equation*}
\end{lemma}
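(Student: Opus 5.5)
We prove the two implications separately. Throughout we use the map
\[
\Phi : \partial\Omega \to \R^d, \qquad \Phi(x) := x + \sigma(x)\,\nu(x),
\]
whose image is $\partial\Omega(\sigma)$, together with the tubular-neighbourhood map
\[
G : \partial\Omega \times (-\delta,\delta) \to \R^d, \qquad G(x,s) := x + s\,\nu(x).
\]
Since $\partial\Omega$ is a compact real-analytic hypersurface and $\nu$ is real-analytic on it, $G$ is real-analytic. Its differential at $s=0$ is invertible, because $T_x\partial\Omega \oplus \R\nu(x) = \R^d$. By the analytic inverse function theorem and compactness (the standard tubular neighbourhood argument), for $\delta$ small, $G$ is a real-analytic diffeomorphism onto an open tube $U_\delta$ around $\partial\Omega$. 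Its inverse is $y \mapsto (\pi(y), s(y))$, with $\pi$ the nearest-point projection and $s$ the signed distance, both real-analytic on $U_\delta$. We choose $\varepsilon \le \delta/2$, so that $\norm{\sigma}_{C^0}<\varepsilon$ places $\partial\Omega(\sigma)$ inside $U_\delta$.

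\textbf{The implication $\Leftarrow$.} If $\sigma \in C^\omega(\partial\Omega)$, then $\Phi = G\circ(\mathrm{id},\sigma)$ is real-analytic. Its differential is $d\Phi_x = \mathrm{id}_{T_x\partial\Omega} + d\sigma_x\otimes\nu + \sigma\, d\nu_x$. Projecting onto $T_x\partial\Omega$ gives $\mathrm{id} + \sigma\, d\nu_x$, which is invertible once $\norm{\sigma}_{C^0}$ is small compared with the curvature bound $\sup\norm{d\nu}$. So $\Phi$ is an immersion. It is also injective, since $\pi\circ\Phi = \mathrm{id}$. Hence $\Phi$ is a real-analytic embedding of a compact manifold, and $\partial\Omega(\sigma)$ is a real-analytic hypersurface.

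\textbf{The implication $\Rightarrow$.} Suppose $\Gamma := \partial\Omega(\sigma)$ is a real-analytic hypersurface. Again $\pi\circ\Phi = \mathrm{id}$ and $s\circ\Phi = \sigma$, so
\[
\sigma = s\circ(\pi|_\Gamma)^{-1}.
\]
The restriction $\pi|_\Gamma : \Gamma\to\partial\Omega$ is a real-analytic map between analytic manifolds. It is a bijection, with inverse $\Phi$, which is $C^m$. It remains to show that $\pi|_\Gamma$ is a local diffeomorphism; the analytic inverse function theorem then makes $(\pi|_\Gamma)^{-1}$ real-analytic, and so $\sigma$ is real-analytic as a composition of analytic maps. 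This holds because $\pi|_\Gamma\circ\Phi = \mathrm{id}$ with $\Phi$ of class $C^1$, so $d(\pi|_\Gamma)$ is surjective at every point of $\Gamma$. Both manifolds have dimension $d-1$, so it is invertible.

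The main obstacle is bookkeeping rather than depth. One must check that the tube $U_\delta$ and the analyticity of $\pi$ and $s$ are available uniformly; this follows from compactness of $\partial\Omega$ and analyticity of $\nu$. One must also fix the notion of \enquote{real-analytic hypersurface} as \enquote{real-analytic embedded submanifold}. With that convention, the only genuinely nontrivial input is the analytic inverse function theorem applied to $\pi|_\Gamma$.
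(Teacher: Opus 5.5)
Your proof is correct. The direction $\Leftarrow$ matches the paper: you get an immersion from invertibility of $\mathrm{Id}+\sigma\,\mathrm{d}\nu_x$, and injectivity from the tubular map $x+t\nu(x)$. For $\Rightarrow$ you take a genuinely different route.

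The paper works locally at each $x_0\in\partial\Omega$. It writes $\partial\Omega$ and $\partial\Omega(\sigma)$ as analytic graphs and sets up a scalar equation $F(x',s)=0$ along the normal lines. It then applies the analytic implicit function theorem in $s$. The needed nondegeneracy $\partial_sF=-\langle\nu_\sigma(x_0),\nu(x_0)\rangle\neq 0$ comes from an explicit formula for the perturbed normal $\nu_\sigma$ together with $C^1$-smallness of $\sigma$.

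You instead invert the tubular map $G$ globally, which gives analytic functions $(\pi,s)$ on a tube. You then apply the analytic inverse function theorem to $\pi|_\Gamma$. The nondegeneracy comes for free from differentiating $\pi\circ\Phi=\mathrm{id}$: the image of $\mathrm{d}\Phi_x$ lies in $T_{\Phi(x)}\Gamma$, so $\mathrm{d}(\pi|_\Gamma)$ is onto, hence invertible by a dimension count. Because $\pi|_\Gamma$ is globally bijective with inverse $\Phi$, the local analytic inverses coincide with $\Phi$, and $\sigma=s\circ\Phi$ is analytic.

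This is the same transversality (normal lines cross $\Gamma$ transversally), but you extract it without ever computing $\nu_\sigma$. Your converse also needs only $\|\sigma\|_{C^0}<\delta$ and $\sigma\in C^1$, not $C^1$-smallness.

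The price is that you rely on the analytic version of the tubular neighbourhood theorem, while the paper avoids this by staying in local charts. That step is standard: $G$ is an analytic local diffeomorphism and a bijection onto $U_\delta$, so its inverse is analytic. You also do not need to identify $G^{-1}$ with nearest-point projection and signed distance; analyticity of $G^{-1}$ is all you use.
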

\begin{proof}
    Let $\varepsilon >0$ be small enough such that the map $\sigma \in C^m(\partial\Omega) \mapsto \Omega(\sigma) \in \OO^m(D)$ is well defined on a $C^m$ ball around $0$ of radius $\varepsilon$, and is a homeomorphism onto a neighborhood of $\Omega$ in $\OO^m(D)$. 
    \medskip

    We first show that if $\sigma \in C^\omega(\partial\Omega)$, then $\partial \Omega(\sigma)$ is analytic. Given $\sigma \in C^\omega(\partial \Omega)$ with {$\norm{\sigma}_{C^m} < \varepsilon$}, define $F : \partial\Omega \to \R^d$ by 
    \begin{equation*}
       \forall x \in \partial\Omega, \quad F(x) := x + \sigma(x)\, \nu(x).
    \end{equation*}
    It suffices to show that $F$ is an injective analytic immersion: since $\partial \Omega$ is compact, this implies that $F$ is an analytic embedding, so that $\partial \Omega(\sigma) = F(\partial\Omega)$ is an analytic submanifold of $\R^d$. 
    For $v\in T_x \partial \Omega$, we have
    \begin{equation*}
        \mathrm{d}F_x (v) = v + \sigma(x)\, \mathrm{d}\nu_x (v) + \mathrm{d}\sigma_x (v) \, \nu(x).
    \end{equation*}
    By classical Riemannian geometry, $\mathrm{d}\nu_x$ is the Weingarten endomorphism at $x\in \partial\Omega$, a self-adjoint endomorphism of $T_x \partial\Omega$, so $\mathrm{d}\nu_x (v) \in T_x \partial\Omega$. Using the orthogonal decomposition $\R^d = T_x \partial\Omega \oplus \R \nu(x)$, the injectivity of $\mathrm{d}F_x$ amounts to the invertibility of $\mathrm{Id}_{T_x \partial\Omega} + \sigma(x)\, \mathrm{d}\nu_x$,
     which holds whenever {$|\sigma(x)| \, \norm{\mathrm{d}\nu_x}_{\mathrm{op}} < 1$, where $\norm{\cdot}_{\mathrm{op}}$ denotes the operator norm}. 
     By compactness of $\partial\Omega$, {up to shrinking $\varepsilon$ we may assume that $\varepsilon \, \max_{x\in \partial\Omega} \norm{\mathrm{d}\nu_x}_{\mathrm{op}} < 1$}, and then $F$ is an analytic immersion.
    By a result of Lee \cite[Theorem 6.24]{Lee2012}, {again up to shrinking $\varepsilon$,} the map $E : \partial \Omega \times (-\varepsilon, \varepsilon) \to \R^d$, $E(x, t) = x + t\, \nu(x)$, is a smooth embedding, hence injective. Thus $F(x) = E(x, \sigma(x))$ is injective, $F$ is an analytic embedding, and $\partial \Omega(\sigma) = F(\partial\Omega)$ is an analytic submanifold of $\R^d$.
    \medskip 

    We now prove the converse. Let $\sigma \in C^m(\partial\Omega)$ with $\norm{\sigma}_{C^m} < \varepsilon$ be such that $\partial \Omega(\sigma)$ is analytic.
    Fix $x_0 \in \partial \Omega$, set $y_0=x_0 + \sigma(x_0) \nu(x_0)$, and let $\nu_\sigma : \partial \Omega \to \R^d$ map each $x\in \partial \Omega$ to the unit exterior normal of $\partial \Omega(\sigma)$ at $x+\sigma(x)\nu(x)$.

    Taking an orthonormal basis of $T_{x_0} \partial \Omega$ and completing it with $\nu(x_0)$, the analyticity of $\partial \Omega$ provides an analytic real-valued function $f$ defined near $0$ in $\R^{d-1}$, with $f(0)=0$ and $\nabla f(0)=0$, and $A\in O_d(\R)$ {with $A e_d = \nu(x_0)$}, such that
    \begin{equation*}
        j : x' \mapsto x_0 + A(x', f(x'))
    \end{equation*}
    is an analytic parametrization of $\partial \Omega$ near $x_0$. Likewise, since $\partial \Omega(\sigma)$ is analytic, there exist an analytic function $g$ near $0$ in $\R^{d-1}$, with $g(0)=0$ and $\nabla g(0)=0$, and $B\in O_d(\R)$ {with $B e_d = \nu_\sigma(x_0)$}, such that
    \begin{equation*}
        k : y' \mapsto y_0 + B(y', g(y'))
    \end{equation*}
    is an analytic parametrization of $\partial \Omega(\sigma)$ near $y_0$. Define $\Psi(x',s) := j(x') + s\, \nu(j(x'))$ for $x'$ near $0$ in $\R^{d-1}$ and $s$ near $\sigma(x_0)$ in $\R$. This $\Psi$ is analytic and satisfies $\Psi(x', \sigma(j(x'))) \in \partial \Omega(\sigma)$. To apply the implicit function theorem, set
    \begin{equation*}
        F : (x',s) \mapsto g(\pi_{1,d-1}(B^{-1}(\Psi(x',s) - y_0))) - \pi_d(B^{-1}(\Psi(x',s) - y_0)),
    \end{equation*}
    where $\pi_{1,d-1} : \R^d \to \R^{d-1}$ and $\pi_d : \R^d \to \R$ are the canonical projections onto the first $d-1$ and the last coordinates, respectively. Then $F$ is analytic, as $g$ and $\Psi$ are, and $F(0, \sigma(x_0)) = 0$. Since $\nabla g(0)=0$,
    \begin{equation*}
        \partial_s F(0, \sigma(x_0)) =  -\pi_d(B^{-1}(\nu(x_0))).
    \end{equation*}
    It remains to show that $\pi_d(B^{-1}(\nu(x_0))) \neq 0$. To this end, a direct computation gives the expression
    \begin{equation*}
        {\nu_\sigma(x)=\frac{\nu(x)- (\mathrm{Id}+\sigma(x)\,\mathrm{d}\nu_x)^{-1}(\nabla_\tau \sigma(x))}{\sqrt{1+|(\mathrm{Id}+\sigma(x)\,\mathrm{d}\nu_x)^{-1}(\nabla_\tau \sigma(x))|^2}},}
    \end{equation*}
    {where $\nabla_\tau\sigma$ denotes the tangential gradient of $\sigma$. Indeed, the vector $(\mathrm{Id}+\sigma(x)\,\mathrm{d}\nu_x)^{-1}(\nabla_\tau\sigma(x))$ is tangent to $\partial\Omega$ and therefore orthogonal to $\nu(x)$. The displayed expression has unit length and is orthogonal to $\mathrm{d}F_x(v)$ for every $v\in T_x\partial\Omega$, using the self-adjointness of $\mathrm{Id}+\sigma(x)\,\mathrm{d}\nu_x$.}
    It follows from the above expression that $\nu_\sigma(x) = \nu(x) + O(\|\sigma\|_{C^1})$, so it points outward for $\varepsilon$ small as $\nu(x)$ points outward. Hence, for $\varepsilon>0$ small enough, $\scalp{\nu_\sigma(x_0)}{\nu(x_0)} \geq 1/2$, and since $B$ is orthogonal with $Be_d = \nu_\sigma(x_0)$,
    \begin{equation*}
        \pi_d(B^{-1}(\nu(x_0))) = \scalp{e_d}{B^{-1}(\nu(x_0))} = \scalp{B e_d}{\nu(x_0)}  = \scalp{\nu_\sigma(x_0)}{\nu(x_0)} \geq 1/2.
    \end{equation*}
    By the implicit function theorem, there is an analytic function $h$ defined near $0$ in $\R^{d-1}$ such that, near $(0, \sigma(x_0))$, $F(x',s)=0$ if and only if $s = h(x')$. In particular $\sigma(j(x')) = h(x')$ for $x'$ near $0$, so $\sigma$ is analytic near $x_0$. Since $x_0 \in \partial \Omega$ was arbitrary, $\sigma \in C^\omega(\partial\Omega)$.
\end{proof}

\subsection{Meagerness of smoother functions and conclusion}

Lemma~\ref{lem:param-ana} reduces the analyticity of the perturbed boundary $\partial\Omega(\sigma)$ to that of the scalar function $\sigma \in C^m(\partial\Omega)$. It therefore remains to show that analytic functions are meager in $C^m(\partial\Omega)$. We establish the following stronger statement: even $C^{m+1}$ is already meager in $C^m$.

\begin{lemma}\label{lem:Cm-meager}
    Let $M$ be a smooth compact manifold, then $C^{m+1}(M)$ is meager in $C^m(M)$.
\end{lemma}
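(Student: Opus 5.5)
I will write $C^{m+1}(M)$ as a countable union of closed subsets of $C^m(M)$ with empty interior. Fix a finite atlas $(U_i,\kappa_i)_{1\le i\le p}$ of $M$. Fix also compact sets $K_i\subset U_i$ whose images cover $M$. Norms are taken on the chart images $\kappa_i(K_i)$. For $N\geq1$ set
\[
F_N := \bigl\{ u \in C^m(M) : \text{for all } i \text{ and all } |\alpha| = m,\ \partial^\alpha (u\circ\kappa_i^{-1}) \text{ is } N\text{-Lipschitz on } \kappa_i(K_i) \bigr\}.
\]
Every $u\in C^{m+1}(M)$ has bounded $(m+1)$-st derivatives on these compacts. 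Those derivatives control the Lipschitz constants of the $m$-th derivatives along segments, so after a harmless reduction I will take the $\kappa_i(K_i)$ to be closed balls, which are convex. Hence $C^{m+1}(M)\subset\bigcup_N F_N$. It therefore suffices to show that each $F_N$ is closed and nowhere dense in $C^m(M)$, since a subset of a meager set is meager.

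\textbf{Closedness.} If $u_k\to u$ in $C^m(M)$, then $\partial^\alpha(u_k\circ\kappa_i^{-1})\to \partial^\alpha(u\circ\kappa_i^{-1})$ uniformly on $\kappa_i(K_i)$. The inequality $|\partial^\alpha v(x)-\partial^\alpha v(y)|\le N|x-y|$ passes to pointwise limits, so $F_N$ is closed.

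\textbf{Empty interior.} Let $u\in F_N$ and $\delta>0$. I will construct $w\in C^m(M)$ with $\|w\|_{C^m}<\delta$ and $u+w\notin F_N$. Pick an interior point $x_0$ of $\kappa_1(K_1)$ and a cutoff $\chi\in C^\infty_c$ equal to $1$ near $x_0$ and supported in $\kappa_1(\mathrm{int}\,K_1)$. Let $g(t)=\eta\,|t|^{m+1/2}$ with $\eta>0$ small. Set $w := \bigl(\chi(x)\, g(x_1 - x_{0,1})\bigr)\circ\kappa_1$, extended by $0$ to the rest of $M$.

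Then $\partial_1^m w$ near $x_0$ equals $c\,\eta\,|x_1-x_{0,1}|^{1/2}$ times a sign factor, with $c=(m+\tfrac12)(m-\tfrac12)\cdots\tfrac32\neq 0$. This function is continuous, so $w\in C^m$. Shrinking the support and taking $\eta$ small makes $\|w\|_{C^m}<\delta$; the $1/2$-Hölder profile has small sup norm on a small interval. However, $\partial_1^m w$ is not Lipschitz at $x_0$: its difference quotients there blow up like $|h|^{-1/2}$.

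Since $\partial_1^m(u\circ\kappa_1^{-1})$ is $N$-Lipschitz, the sum $\partial_1^m((u+w)\circ\kappa_1^{-1})$ is not $N$-Lipschitz, indeed not Lipschitz at all. Hence $u+w\notin F_N$, so $F_N$ has empty interior.

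\textbf{Main obstacle.} The delicate point is the bookkeeping that makes $F_N$ both closed and large enough to contain $C^{m+1}$. This is why I use the Lipschitz condition on the fixed compacts $\kappa_i(K_i)$, chosen as balls, rather than a condition on $C^{m+1}$ norms. A condition on $C^{m+1}$ norms would not be closed in $C^m$. With Lipschitz bounds, closedness is automatic, and the localized $|t|^{m+1/2}$ bump handles density.

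Combined with Lemma~\ref{lem:param-ana} applied to $M=\partial\Omega$, this will then give Theorem~\ref{thm:meager} through a countable cover of $\OO^m(D)$, using its separability.
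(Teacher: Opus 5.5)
Your proof is correct and follows essentially the same route as the paper. It uses the same closed sets $F_N$ (functions whose $m$-th chart derivatives are $N$-Lipschitz on closed coordinate balls), the same mean-value argument for $C^{m+1}(M)\subset\bigcup_N F_N$, and the same localized $|t|^{m+\beta}$ bump (you take $\beta=1/2$) for empty interior. The only minor difference is that you perturb an arbitrary $u\in F_N$ directly using its own $N$-Lipschitz bound, whereas the paper first reduces to $f\in C^{m+1}(M)$ via density of $C^{m+1}$ in $C^m$, a step your version makes unnecessary.
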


\begin{proof}
    {Set $k := \dim M$. }Fix a smooth atlas of coordinate balls. By compactness, there are
    finitely many charts $\phi_a \colon U_a \to {\mathbb{R}^k}$ ($1 \le a \le N$),
    together with radii $r_a > 0$, such that $\overline{B(0,r_a)} \subset \phi_a(U_a)$
    and $M = \bigcup_{a=1}^N \phi_a^{-1}\!\big(B(0,r_a)\big)$. Write
    $B_a := \overline{B(0,r_a)}$ and, for $f \in C^m(M)$, let
    $f_a := f \circ \phi_a^{-1}$ denote its local representative. We equip $C^m(M)$
    with the norm
    \[
        \|f\|_{C^m} := \max_{1 \le a \le N}\ \max_{|\alpha| \le m}\ \sup_{B_a}
        \big|\partial^\alpha f_a\big|,
    \]
    which turns it into a Banach space; by compactness of $M$, any other such norm
    built from a finite atlas is equivalent. Fix $\beta \in (0,1)$ and, for $n \in \mathbb{N}^\ast$, set
    \[
        F_n := \Big\{ f \in C^m(M) \ :\ \forall a,\ \forall |\alpha| = m,\
        \forall u, v \in B_a,\quad
        \big|\partial^\alpha f_a(u) - \partial^\alpha f_a(v)\big| \le n\,|u - v| \Big\}.
    \]
    We show that each $F_n$ is closed with empty interior and that
    $C^{m+1}(M) \subset \bigcup_n F_n$.

    \medskip
    \emph{Closedness.} Let $f^{(j)} \to f$ in $C^m(M)$. For every $a$ and
    every $|\alpha| = m$ one has $\partial^\alpha f^{(j)}_a \to \partial^\alpha f_a$
    uniformly on $B_a$, so for fixed $u, v \in B_a$ the inequality
    $|\partial^\alpha f^{(j)}_a(u) - \partial^\alpha f^{(j)}_a(v)| \le n|u-v|$ passes
    to the limit. Hence $f \in F_n$, and $F_n$ is closed.

    \medskip
    \emph{Empty interior.} Since $C^{m+1}(M)$ is dense in $C^m(M)$ { (by mollification in charts and a partition of unity, see Hirsch\ \cite[Chapter~2]{Hirsch1976})}, it suffices to
    show that every $f \in C^{m+1}(M)$ is a limit, in $C^m(M)$, of functions outside
    $F_n$. Fix such an $f$ and $\varepsilon > 0$. Work in the chart $a = 1$ with
    coordinates ${w = (w_1, \dots, w_k)}$, and assume $0 \in \mathring{B_1}$. Let
    $\zeta(t) := |t|^{m+\beta}$, so that $\zeta \in C^m(\mathbb{R})$ with
    $\zeta^{(m)}(t) = c_{m,\beta}\,|t|^{\beta}\operatorname{sgn}(t)^{m}$ for a constant
    $c_{m,\beta} > 0$; in particular $\zeta^{(m)}$ is $\beta$-Hölder but not Lipschitz
    near $0$, as $|\zeta^{(m)}(t) - \zeta^{(m)}(0)|\,/\,|t| = c_{m,\beta}\,|t|^{\beta-1}
    \to +\infty$ as $t \to 0$. Pick a cutoff $\eta \in C_c^\infty(\mathring{B_1})$
    equal to $1$ near $0$, and set
    \[
        g := f + \delta\,\widetilde{h}, \qquad
        \widetilde{h} := \big(\eta\cdot(\zeta \circ w_1)\big) \circ \phi_1
        \ \text{ extended by } 0, \qquad
        \delta := \frac{\varepsilon}{2(\|\widetilde h\|_{C^m} + 1)},
    \]
    so that $\widetilde h \in C^m(M)$ and $\|g - f\|_{C^m} < \varepsilon$. Near $0$ we
    have $g_1 = f_1 + \delta\,\zeta(w_1)$. Taking $\alpha = (m, 0, \dots, 0)$ and points
    $u = (t,0,\dots,0)$, $v = 0$, the reverse triangle inequality gives
    \[
        \frac{\big|\partial^\alpha g_1(u) - \partial^\alpha g_1(v)\big|}{|t|}
        \ \ge\ \delta\,\frac{\big|\zeta^{(m)}(t) - \zeta^{(m)}(0)\big|}{|t|}
        - \frac{\big|\partial_1^m f_1(u) - \partial_1^m f_1(0)\big|}{|t|}
        \ \ge\ \delta\,c_{m,\beta}\,|t|^{\beta - 1} - L,
    \]
    where $L < \infty$ is a Lipschitz constant for $\partial_1^m f_1$ on $B_1$
    (finite since $f \in C^{m+1}$). The right-hand side tends to $+\infty$ as
    $t \to 0^+$, so  $g \notin F_n$.
    Thus every ball in $C^m(M)$ meets the complement of $F_n$, i.e. $F_n$ has empty
    interior.

    \medskip
    \emph{Inclusion $C^{m+1}(M) \subset \bigcup_n F_n$.} Let $f \in C^{m+1}(M)$. For
    each $a$ and $|\alpha| = m$, the function $\partial^\alpha f_a$ is $C^1$ on a
    neighborhood of $B_a$, hence
    \[
        L_a := \max_{|\alpha| = m}\ \sup_{B_a}\ \big|\nabla\,\partial^\alpha f_a\big|
        < \infty.
    \]
    As $B_a$ is convex, the mean value inequality yields, for all $u, v \in B_a$ and
    $|\alpha| = m$,
    \[
        \big|\partial^\alpha f_a(u) - \partial^\alpha f_a(v)\big| \le L_a\,|u - v|.
    \]
    With $L := \max_{1 \le a \le N} L_a < \infty$, any integer $n \ge L$ gives
    $f \in F_n$. Hence $C^{m+1}(M) \subset \bigcup_n F_n$.
\end{proof}

Combining the local chart of Lemma~\ref{lem:param-ana} with the meagerness of Lemma~\ref{lem:Cm-meager}, we can now conclude.

\begin{proof}[{Proof of Theorem~\ref{thm:meager}}]
    Let $\Omega \in \mathcal{A}$. By Lemma \ref{lem:param-ana}, there is an open neighborhood $V_\Omega$ of $\Omega$ in $\OO^m(D)$, homeomorphic to an open ball $B$ in $C^m(\partial\Omega)$, under which $\mathcal{A} \cap V_\Omega$ corresponds to $C^\omega(\partial\Omega) \cap B$. By Lemma \ref{lem:Cm-meager}, $C^\omega(\partial\Omega)\subset C^{m+1}(\partial\Omega)$ is meager in $C^m(\partial\Omega)$, hence $C^\omega(\partial\Omega) \cap B$ is meager in $B$, and thus $\mathcal{A} \cap V_\Omega$ is meager in $V_\Omega$. 

     Hence, since meagerness is a local property, see Kechris \cite[Theorem 8.29]{kechris1995classical}, $\mathcal{A}$ is meager in $G:= \bigcup_{\Omega \in \mathcal{A}} V_\Omega$. As $G$ is open in $\OO^m(D)$ and contains $\mathcal{A}$, we conclude that $\mathcal{A}$ is meager in $\OO^m(D)$.
\end{proof}

\section{Generic triviality of the isometry group}\label{sec:isometries}
Throughout this section $\Omega \in \OO^m(D)$ and
\begin{equation*}
    \mathrm{Iso}(\Omega) := \{ S \in \mathrm{Isom}(\R^d) \, : \, S(\Omega) = \Omega \}.
\end{equation*}
Every $S \in \mathrm{Isom}(\R^d)$ is a rigid motion $S(x) = Ax + c$ with
$A \in O_d(\R)$, so $S$ preserves the Lebesgue measure. Conversely, since
$\Omega$ is connected, any Riemannian isometry of $\Omega$ endowed with the
induced flat metric is also a rigid motion, so the two possible readings of
$\mathrm{Iso}(\Omega)$ agree. Indeed, let $\phi : \Omega \to \Omega$ be a Riemannian isometry. Then $\phi$ preserves the flat Levi-Civita connection, namely $\phi^* \nabla = \nabla$. Writing this equation in coordinates gives
$\partial_i \partial_j \phi^k = 0$,
and therefore, $\Omega$ being connected, $\phi$ is of the form $\phi(x)=Ax+b$, with $A = \mathrm{d}\phi \in O_d(\R)$ since $\phi$ is an isometry for the flat metric.
\subsection{The sign characters of a symmetry}

\begin{lemma}\label{lem:U_S}
Let $S \in \mathrm{Iso}(\Omega)$ and set $U_S f := f \circ S$. Then $U_S$ is a
unitary operator on $L^2(\Omega)$ which maps $H^1_0(\Omega)$
onto itself and commutes with the Dirichlet Laplacian, in the sense that
$U_S\bigl(H^2(\Omega) \cap H^1_0(\Omega)\bigr) = H^2(\Omega) \cap H^1_0(\Omega)$
and $\Delta (U_S f) = U_S (\Delta f)$ there. Moreover
$U_{S \circ T} = U_T \, U_S$ for all $S, T \in \mathrm{Iso}(\Omega)$.
\end{lemma}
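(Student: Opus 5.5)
The plan is to verify each property directly from the fact that $S(x)=Ax+c$ with $A\in O_d(\R)$ is an affine bijection of $\R^d$ that maps $\Omega$ onto itself. We set $U_S f := f\circ S$.

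\textbf{Unitarity.} Since $|\det A|=1$, the change of variables $y=S(x)$ gives
\[
\norm{U_S f}_{L^2(\Omega)}^2=\int_\Omega |f(S(x))|^2\df x=\int_{S(\Omega)}|f(y)|^2\df y=\norm{f}_{L^2(\Omega)}^2 .
\]
So $U_S$ is an isometry of $L^2(\Omega)$. Because $S^{-1}\in\mathrm{Iso}(\Omega)$ and $U_{S^{-1}}$ is an inverse of $U_S$, the operator $U_S$ is surjective, hence unitary.

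\textbf{The composition rule.} This is the purely formal identity
\[
U_{S\circ T}f=f\circ S\circ T=U_T(f\circ S)=U_TU_Sf .
\]

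\textbf{Sobolev spaces and the Laplacian.} By the chain rule, $\nabla(f\circ S)=A^{\top}(\nabla f)\circ S$ and $\nabla^2(f\circ S)=A^{\top}\bigl((\nabla^2 f)\circ S\bigr)A$. I would first check these for $f\in C^\infty_c(\Omega)$, which is legitimate because $S$ maps compact subsets of $\Omega$ to compact subsets of $\Omega$, and then pass to weak derivatives by density. Together with the measure preservation above, they give
\[
\norm{\nabla U_Sf}_{L^2}=\norm{\nabla f}_{L^2},\qquad \norm{\nabla^2 U_Sf}_{L^2}=\norm{\nabla^2 f}_{L^2}.
\]

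In particular, $U_S$ maps $C^\infty_c(\Omega)$ onto itself and is an $H^1$-isometry. Taking closures, $U_S$ maps $H^1_0(\Omega)$ into itself, and applying the same to $S^{-1}$ shows the map is onto. The same reasoning with $H^2$ shows that $U_S$ maps $H^2(\Omega)\cap H^1_0(\Omega)$ onto itself.

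Finally, taking the trace of the Hessian formula and using $AA^{\top}=I$ gives
\[
\Delta(f\circ S)=\operatorname{tr}\bigl(A^{\top}(\nabla^2f\circ S)A\bigr)=(\Delta f)\circ S .
\]
That is, $\Delta(U_Sf)=U_S(\Delta f)$ on $H^2(\Omega)\cap H^1_0(\Omega)$.

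\textbf{Main obstacle.} The only point needing care is that the chain rule holds for weak derivatives under the affine change of variables, so that it applies to Sobolev functions and not just smooth ones. I would handle this by approximating $f$ by smooth functions in $H^2(\Omega)$: for $H^2$, functions in $C^\infty(\overline\Omega)$ are dense since $\Omega$ is a $C^m$ domain, and for $H^1_0$ one uses $C^\infty_c(\Omega)$ directly. The continuity of $U_S$ in the relevant norms, already established by the change of variables, then lets the identities pass to the limit.
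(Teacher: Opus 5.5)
Your proof is correct and follows the same route as the paper: change of variables with $|\det A|=1$ for unitarity (with inverse $U_{S^{-1}}$), the affine chain rule together with $A^{\mathsf T}A=\mathrm{Id}$ for the gradient, Hessian and Laplacian, and the formal identity for $U_{S\circ T}=U_TU_S$. The only variation is that you obtain the invariance of $H^1_0(\Omega)$ by closing $C^\infty_c(\Omega)$ under the $H^1$-isometry $U_S$, rather than through traces and $S(\partial\Omega)=\partial\Omega$ as the paper does. You also justify the weak chain rule by density, which the paper leaves implicit; both points are slightly more careful than the paper, but the argument is the same.
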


\begin{proof}
Write $S(x) = Ax + c$ with $A \in O_d(\R)$. Since $S(\Omega) = \Omega$ and
$|\det A| = 1$, the change of variables $y = S(x)$ shows that $U_S$ is a linear
isometry of $L^2(\Omega)$, with inverse $U_{S^{-1}}$. As $S$ is affine,
$\nabla(f \circ S) = A^{\mathsf T} (\nabla f) \circ S$ and
$\Delta(f \circ S) = (\Delta f) \circ S$, because $A^{\mathsf T} A = \mathrm{Id}$.
Hence $U_S$ preserves $H^1(\Omega)$ and $H^2(\Omega)$, and it preserves the
vanishing of the trace since $S(\partial\Omega) = \partial\Omega$. Finally, for
$x \in \Omega$,
$(U_{S \circ T} f)(x) = f(S(T(x))) = (U_S f)(T(x)) = (U_T U_S f)(x)$.
\end{proof}

Let now $\Omega \in \mathcal{S}$, so that every $\lambda_n(\Omega)$ is simple and
the real $L^2$-normalized eigenfunction $\varphi_n$ is determined up to sign. By
Lemma~\ref{lem:U_S}, $U_S \varphi_n$ is again a real $L^2$-normalized
eigenfunction for $\lambda_n$, hence
\begin{equation}\label{eq:sign-character}
    \varphi_n \circ S = \varepsilon_n(S)\, \varphi_n
    \qquad \text{for a unique } \varepsilon_n(S) \in \{\pm 1\},
\end{equation}
and $\varepsilon_n(S)$ does not depend on the choice of sign for $\varphi_n$. From
$U_{S \circ T} = U_T U_S$ and \eqref{eq:sign-character} we get
$\varepsilon_n(S \circ T)\varphi_n = U_T(\varepsilon_n(S) \varphi_n)
= \varepsilon_n(S)\varepsilon_n(T)\varphi_n$, so that
\begin{equation*}
    \varepsilon_n : \mathrm{Iso}(\Omega) \longrightarrow \{\pm 1\}
\end{equation*}
is a group homomorphism, a \emph{sign character} of $\mathrm{Iso}(\Omega)$.

\subsection{Simplicity consequences on the isometry group}

The following proposition makes precise the intuitive statement that a simple
spectrum severely restricts the symmetries of $\Omega$. It shows that the isometry group must be a finite abelian group of exponent at most $2$. This was already known for compact connected Riemannian manifolds and was shown by Ikeda in \cite{Ikeda1985}. Here, using the affine structure of 
the isometry group, we also obtain a sharp bound $\# \mathrm{Iso}(\Omega) \leq 2^d$, which is new to our knowledge. See Example~\ref{ex:rectangle} for domains that saturate this bound.

\begin{proposition}\label{prop:iso-structure}
Let $\Omega \in \mathcal{S}$. Then the map
\begin{equation*}
    \varepsilon = (\varepsilon_n)_{n \geq 1} \, : \, \mathrm{Iso}(\Omega) \longrightarrow \{\pm 1\}^{\N}
\end{equation*}
is an injective group homomorphism. Consequently $\mathrm{Iso}(\Omega)$ is abelian
of exponent at most $2$, and there exists $0 \leq k \leq d$ with
$\mathrm{Iso}(\Omega) \cong (\Z/2\Z)^k$. In particular
$\# \mathrm{Iso}(\Omega) \leq 2^d$.
\end{proposition}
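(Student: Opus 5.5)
Since each $\varepsilon_n$ is a group homomorphism, so is the product map $\varepsilon$. The proof then has two parts: injectivity of $\varepsilon$, followed by a structural bound on $\mathrm{Iso}(\Omega)$ coming from the affine form of its elements.

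\emph{Injectivity.} Suppose $\varepsilon_n(S)=1$ for every $n$. Then $U_S\varphi_n=\varphi_n$ for all $n$. By Lemma~\ref{lem:U_S}, $U_S$ is a bounded (unitary) operator, and $(\varphi_n)$ is a Hilbert basis of $L^2(\Omega)$, so $U_S=\mathrm{Id}$. This means $f\circ S=f$ for every $f\in L^2(\Omega)$. Testing with the coordinate functions $f=x_i$ on the bounded set $\Omega$ gives $S(x)=x$ almost everywhere on $\Omega$. Since $S$ is continuous and $\Omega$ is open, $S=\mathrm{Id}$ on $\Omega$. An affine map that is the identity on a nonempty open set is the identity, so $S=\mathrm{Id}$.

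\emph{Group structure.} Injectivity embeds $\mathrm{Iso}(\Omega)$ into $\{\pm1\}^{\N}$, which is abelian of exponent $2$. Hence $S^2=\mathrm{Id}$ and $ST=TS$ for all $S,T\in\mathrm{Iso}(\Omega)$.

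\emph{Finiteness and the bound.} Write $S(x)=Ax+c$. Every $S\in\mathrm{Iso}(\Omega)$ maps the bounded set $\Omega$ onto itself and preserves Lebesgue measure, so it fixes the barycenter $b$ of $\Omega$. Conjugating by the translation $x\mapsto x-b$, we may assume every element is linear, $S=A$ with $A\in O_d(\R)$. The linear part map $S\mapsto A$ is then injective, so $\mathrm{Iso}(\Omega)$ is isomorphic to a subgroup $G\subset O_d(\R)$ that is commuting and satisfies $A^2=I$ for every element.

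Each $A\in G$ is a symmetric orthogonal involution, hence diagonalizable with eigenvalues $\pm1$. A commuting family of diagonalizable matrices is simultaneously diagonalizable, so all elements of $G$ are diagonal in one common basis, with diagonal entries $\pm1$. It follows that $G$ injects into the diagonal group $\{\pm1\}^d\cong(\Z/2\Z)^d$. A subgroup of an elementary abelian $2$-group is itself elementary abelian, namely an $\mathbb{F}_2$-subspace, so $G\cong(\Z/2\Z)^k$ for some $k\le d$. This gives $\#\mathrm{Iso}(\Omega)\le 2^d$.

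\emph{Main obstacle.} The only delicate step is reducing from affine to linear maps. Without a common fixed point, the map to linear parts need not be injective: a pure translation has trivial linear part. The barycenter argument settles this, because a translation by $c\neq0$ cannot preserve the bounded set $\Omega$, as it would move the barycenter. The rest of the argument is routine linear algebra.
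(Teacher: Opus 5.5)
Your proof is correct and follows the same route as the paper: injectivity from $U_S=\mathrm{Id}$ on the eigenbasis, the barycentre to reduce to commuting linear involutions, and simultaneous diagonalization into $\{\pm1\}^d$. There are two cosmetic differences. You detect $S=\mathrm{Id}$ by testing $U_S$ on the coordinate functions rather than on a bump supported in a ball disjoint from its image under $S$. You also invoke simultaneous diagonalization of commuting diagonalizable matrices rather than orthogonal diagonalization of commuting symmetric ones.
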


\begin{proof}
That $\varepsilon$ is a homomorphism is \eqref{eq:sign-character}. Suppose
$\varepsilon_n(S) = 1$ for every $n$. Then $U_S \varphi_n = \varphi_n$ for every
$n$, and since $(\varphi_n)_{n \geq 1}$ is an orthonormal basis of $L^2(\Omega)$,
$U_S = \mathrm{Id}$ on $L^2(\Omega)$. If $S \neq \mathrm{Id}$ then, $S$ being
affine and $\Omega$ open and nonempty, there is $x_0 \in \Omega$ with
$S(x_0) \neq x_0$. {Choose an open ball $B_0\Subset\Omega$ around $x_0$, small enough that $S(B_0)\cap B_0=\emptyset$, and take $f\in C_c(B_0)$ with $f\not\equiv0$. For $x \in B_0$ we have $S(x) \notin B_0$, so $U_Sf(x)=f( S(x))=0$: the function $U_S f$ vanishes identically on $B_0$, whereas $f$ does not. This contradicts $U_Sf=f$ in $L^2(\Omega)$.} Hence
$S = \mathrm{Id}$ and $\varepsilon$ is injective.

The target $\{\pm 1\}^{\N}$ is abelian of exponent $2$, so $\mathrm{Iso}(\Omega)$
is too. Let $b := |\Omega|^{-1}\int_\Omega x \, \df x$ be the barycentre of
$\Omega$. For $S(x) = Ax + c$ in $\mathrm{Iso}(\Omega)$, the change of variables
$y = S(x)$ gives $b = S(b)$, so after translating we may assume $b = 0$ and
$\mathrm{Iso}(\Omega) \subset O_d(\R)$. Every $A \in \mathrm{Iso}(\Omega)$ then
satisfies $A^2 = \mathrm{Id}$ and $A^{\mathsf T} = A^{-1} = A$, so $A$ is
symmetric. A commuting family of real symmetric matrices is simultaneously
orthogonally diagonalizable, so there is $P \in O_d(\R)$ with
$P^{-1}\mathrm{Iso}(\Omega) P \subset \{\mathrm{diag}(\pm 1, \dots, \pm 1)\}
\cong (\Z/2\Z)^d$. A subgroup of $(\Z/2\Z)^d$ is isomorphic to $(\Z/2\Z)^k$ for
some $k \leq d$.
\end{proof}

The deterministic statement behind Theorem~\ref{thm:isometry-generic} is now immediate.

\begin{proposition}\label{prop:iso-deterministic}
Let $\Omega \in \mathcal{S}$ be such that $\int_\Omega \varphi_n \neq 0$ for every
$n \geq 1$. Then $\mathrm{Iso}(\Omega) = \{\mathrm{Id}\}$.
\end{proposition}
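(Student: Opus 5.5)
The plan is to show that every sign character is trivial and then invoke the injectivity in Proposition~\ref{prop:iso-structure}. Fix $S \in \mathrm{Iso}(\Omega)$ and $n \geq 1$. By \eqref{eq:sign-character} we have $\varphi_n \circ S = \varepsilon_n(S)\,\varphi_n$. We integrate both sides over $\Omega$. By Lemma~\ref{lem:U_S}, $S$ is an affine bijection of $\Omega$ with $|\det A| = 1$, so the change of variables $y = S(x)$ gives
\[
\int_\Omega \varphi_n(S(x))\,\df x = \int_\Omega \varphi_n(y)\,\df y .
\]
Hence $\int_\Omega \varphi_n = \varepsilon_n(S)\int_\Omega \varphi_n$.

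The hypothesis $\int_\Omega \varphi_n \neq 0$ lets us divide, and we obtain $\varepsilon_n(S) = 1$. Since $n$ was arbitrary, $\varepsilon(S) = (1,1,\dots)$ is the identity element of $\{\pm1\}^{\N}$. Proposition~\ref{prop:iso-structure} says that $\varepsilon$ is an injective homomorphism, so $S = \mathrm{Id}$. This gives $\mathrm{Iso}(\Omega) = \{\mathrm{Id}\}$.

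There is no real obstacle here. The only point to check is that the change of variables is legitimate: $S$ maps $\Omega$ onto $\Omega$, and $\varphi_n \in L^1(\Omega)$ because $\Omega$ is bounded. Both facts are already recorded in Lemma~\ref{lem:U_S} and in the proof of Proposition~\ref{prop:iso-structure}. Note also that $\varepsilon_n(S)$ does not depend on the sign chosen for $\varphi_n$, so the argument is independent of that choice.
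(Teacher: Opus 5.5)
Your proof is correct and is essentially the paper's argument. The paper argues by contradiction: if $S \neq \mathrm{Id}$, the injectivity of $\varepsilon$ from Proposition~\ref{prop:iso-structure} gives some $n$ with $\varepsilon_n(S) = -1$, and integrating \eqref{eq:sign-character} after the change of variables forces $\int_\Omega \varphi_n = 0$. You use the same two ingredients in direct form, first showing $\varepsilon_n(S) = 1$ for every $n$ and then invoking injectivity.
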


\begin{proof}
Let $S \in \mathrm{Iso}(\Omega)$. Assume by contradiction that $S\not = \mathrm{Id}$, then there exists $n\geq 1$ such that
$\varepsilon_n(S) = -1$, and integrating \eqref{eq:sign-character} we get after change of variable that $\int_\Omega \varphi_n =0$, which is absurd.
\end{proof}

\begin{proof}[Proof of Theorem~\ref{thm:isometry-generic}]
By Theorem~\ref{thm:main} the set
$\mathcal{R} = \bigcap_{n \geq 1}(\mathcal{S}_n \setminus Z_n)$ is a dense
$G_\delta$ in $\OO^m(D)$, and every $\Omega \in \mathcal{R}$ lies in
$\mathcal{S}$ and has $\int_\Omega \varphi_n \neq 0$ for all $n$. By
Proposition~\ref{prop:iso-deterministic}, $\mathcal{R}$ is contained in
$\{\Omega : \mathrm{Iso}(\Omega) = \{\mathrm{Id}\}\}$, which therefore contains a
dense $G_\delta$ and is residual.
\end{proof}

To finish, we look at the family of rectangular boxes as an example that saturates the cardinality bound of Proposition~\ref{prop:iso-structure}. Even though these are not $C^m$-regular with $m \geq 3$, we include this example to exhibit the mechanism, as the results in this section do not specifically use the differentiability of the boundary.
\begin{example}\label{ex:rectangle}
Let $\Omega = \prod_{i=1}^{d}(0,a_i)$ be a rectangular box whose inverse squared
side lengths $a_1^{-2},\dots,a_d^{-2}$ are linearly independent over $\Q$. The
Dirichlet eigenpairs are
\begin{equation*}
    \lambda_n = \pi^2 \sum_{i=1}^{d} \frac{n_i^2}{a_i^2},
    \qquad
    \varphi_n(x) = \frac{2^{d/2}}{\sqrt{a_1\cdots a_d}}
    \prod_{i=1}^{d} \sin\Bigl(\frac{n_i \pi x_i}{a_i}\Bigr),
    \qquad n = (n_1,\dots,n_d) \in (\N^*)^d .
\end{equation*}
If $\lambda_n = \lambda_m$, then $\sum_{i=1}^{d}(n_i^2 - m_i^2)\,a_i^{-2} = 0$
with integer coefficients $n_i^2 - m_i^2 \in \Z$; the $\Q$-independence of the
$a_i^{-2}$ forces $n_i^2 = m_i^2$, hence $n_i = m_i$ for every $i$ (as
$n_i, m_i \geq 1$), so the spectrum is simple. (For $d = 2$ this is exactly the
condition $a_1^2/a_2^2 \notin \Q$.) The same hypothesis makes the $a_i$
pairwise distinct, so $\Omega$ carries no permutation symmetry and
\begin{equation*}
    \mathrm{Iso}(\Omega)
    = \bigl\{\, S_1^{\varepsilon_1}\cdots S_d^{\varepsilon_d}
    : \varepsilon \in \{0,1\}^d \,\bigr\}
    \cong (\Z/2\Z)^d,
    \qquad
    S_i(x) = (x_1,\dots,a_i - x_i,\dots,x_d),
\end{equation*}
which shows that the bound $k \leq d$ of Proposition~\ref{prop:iso-structure}
is attained. 
\end{example}

\section{\textcolor{black}{Applications to scalar parabolic control}}\label{sec:control}
{\color{black}
This section records the control-theoretic consequences of Theorem~\ref{thm:main}. Throughout, $D \subset \R^d$ is a bounded connected open set of class $C^m$ with $m \geq 3$ and $d \geq 2$, and $\Omega \in \OO^m(D)$. We fix $T > 0$ and consider the heat equation driven by a single scalar control acting uniformly on $\Omega$,
}
\begin{equation}\label{eq:heat-control}
\begin{cases}
\partial_t z = \Delta z + f(t)\,\mathbf{1}_\Omega & \text{in } \Omega, \\
z = 0 & \text{on } \partial \Omega, \\
z(0) = z_0 & \text{in } \Omega,
\end{cases}
\end{equation}
where $z_0 \in L^2(\Omega)$ is the initial data and $f \in L^2(0,T)$ is the control, which is \emph{homogeneous in space}: the same scalar $f(t)$ multiplies the constant profile $\mathbf{1}_\Omega$ over the whole domain. We say that \eqref{eq:heat-control} is \emph{approximately controllable} (in time $T$) if for every $z_0, z_1 \in L^2(\Omega)$ and every $\varepsilon > 0$ there exists a control $f \in L^2(0,T)$ such that the corresponding solution $z$ satisfies $\|z(T) - z_1\|_{L^2(\Omega)} \leq \varepsilon$.

\medskip 
For the reader less familiar with control theory, let us explain why this is
a meaningful question. The actuation in~\eqref{eq:heat-control} is extremely
poor: a single scalar function of time~$f$, whose spatial action is frozen to
the fixed profile~$\mathbf{1}_\Omega$. Inputs of this kind---in which time
and space are tensorized through a prescribed spatial profile---are known as
\emph{lumped controls}, following Tucsnak and Weiss \cite{TucsnakWeiss2009} and
Privat, Tr\'elat and Zuazua \cite{PrivatTrelatZuazua2017}. They
are considerably weaker than the distributed controls $\mathbf{1}_\omega
u(t,x)$ acting with full spatial freedom on a subdomain~$\omega$, for which
even null controllability holds in arbitrary time. {For such a degenerate,
one-dimensional input, the reachable set is governed by the Fourier coefficients of the fixed profile. Thus the appropriate robust target in the present paper is
\emph{approximate} controllability, reaching arbitrarily close to any $L^2$
target. Stronger notions, such as null controllability for rank-one inputs, require additional moment estimates and are subject to the obstructions already identified by Fattorini and Russell \cite{FattoriniRussell1971}.}

Whether approximate controllability holds for a \emph{generic} domain is a
natural question within a sustained program in the PDE control community of
studying domain-dependent spectral and controllability
properties~\cite{OrtegaZuazua2000,PrivatSigalotti2010,ChitourCoronGaravello2006}.
The question is concretely non-trivial: on a square or a ball,
symmetries force the mean of infinitely many eigenfunctions to vanish, so the
corresponding modes are invisible to the control and approximate
controllability fails outright. Theorem~\ref{thm:main} settles the question
in the affirmative: {the corresponding approximate controllability property holds
generically.}

\medskip

\subsection{The abstract control problem}

Let $\Omega \in \OO^m(D)$ and let $A_\Omega = \Delta$, with domain $D(A_\Omega) = H^2(\Omega) \cap H^1_0(\Omega)$, be the generator of the heat semigroup on $H := L^2(\Omega)$. Recall that $-A_\Omega$ is the Dirichlet Laplacian, with eigenvalues $\lambda_n > 0$ and $L^2$-normalized eigenfunctions $\varphi_n$ as in {Section~\ref{sec:setting}}. The spatial profile of the control is the \emph{bounded} control operator
\begin{equation*}
    B_\Omega := \mathbf{1}_\Omega \in L^2(\Omega),
\end{equation*}
and \eqref{eq:heat-control} rewrites as the abstract Cauchy problem
\begin{equation}\label{eq:abstract-heat}
\left\{
\begin{aligned}
\partial_t z &= A_\Omega z + f(t)\, B_\Omega \quad \forall t \in (0,T), \\
z(0) &= z_0.
\end{aligned}
\right.
\end{equation}
Since $A_\Omega$ generates an analytic semigroup, the approximate controllability of \eqref{eq:abstract-heat} is independent of $T > 0$ (see \cite{BadraTakahashi2014}), and we will simply speak of the approximate controllability of the pair $(A_\Omega, B_\Omega)$. Furthermore, when the spectrum of $A_\Omega$ is simple, keeping the notations of {Section~\ref{sec:setting}}, the Fattorini--Hautus criterion \cite{Fattorini1966,BadraTakahashi2014} characterizes it: the pair $(A_\Omega, B_\Omega)$ is approximately controllable if and only if $\scalp{B_\Omega}{\varphi_n(\Omega)}_{L^2(\Omega)} \neq 0$ for all $n \geq 1$. Here this pairing is nothing but the mean,
\begin{equation*}
    \scalp{B_\Omega}{\varphi_n(\Omega)}_{L^2(\Omega)} = \int_\Omega \varphi_n(\Omega),
\end{equation*}
so that the criterion is exactly the non-vanishing of all the eigenfunction means.

We can now answer our question in the affirmative, by recognizing it as a corollary of Theorem~\ref{thm:main}.

\begin{corollary}\label{cor:control}
    The set of $\Omega \in \OO^m(D)$ such that $(A_\Omega, B_\Omega)$ is approximately controllable is residual in $\OO^m(D)$.
\end{corollary}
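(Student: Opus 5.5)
The plan is to deduce Corollary~\ref{cor:control} directly from Theorem~\ref{thm:main}, using the Fattorini--Hautus criterion recalled just above. Let $\mathcal{R} = \bigcap_{n \geq 1} (\mathcal{S}_n \setminus Z_n)$ be the dense $G_\delta$ set provided by Theorem~\ref{thm:main}. Every $\Omega \in \mathcal{R}$ has simple Dirichlet spectrum, since $\mathcal{R} \subset \mathcal{S}$. For each $n$, the eigenfunction $\varphi_n(\Omega)$ is then determined up to sign, and the condition $\int_\Omega \varphi_n(\Omega) \neq 0$ is unambiguous.

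Fix $\Omega \in \mathcal{R}$. The operator $A_\Omega$ is self-adjoint with compact resolvent and generates an analytic semigroup, and its spectrum is simple. The Fattorini--Hautus criterion of \cite{Fattorini1966,BadraTakahashi2014} therefore applies to the bounded scalar control operator $B_\Omega = \mathbf{1}_\Omega$. It says that $(A_\Omega,B_\Omega)$ is approximately controllable, in any time $T>0$, if and only if $\scalp{B_\Omega}{\varphi_n(\Omega)}_{L^2(\Omega)} \neq 0$ for all $n$.

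This pairing equals $\int_\Omega \varphi_n(\Omega)$, which is nonzero for every $n$ because $\Omega \notin Z_n$ for all $n$. Hence $(A_\Omega, B_\Omega)$ is approximately controllable. The set of approximately controllable domains thus contains the dense $G_\delta$ set $\mathcal{R}$, and is therefore residual in $\OO^m(D)$.

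The only point needing care is checking that the hypotheses of the Hautus-type criterion hold: a diagonalizable generator with simple eigenvalues and a bounded, finite-rank control operator. The eigenvector test $B^*\varphi \neq 0$ for every eigenvector $\varphi$ of $A_\Omega^* = A_\Omega$ then reduces exactly to the nonvanishing of the means. Without simplicity, one would instead need the rank of $B$ to be at least the multiplicity of each eigenvalue, which fails for a scalar control. This is precisely why we restrict to $\mathcal{S}$, and Theorem~\ref{thm:main} supplies simplicity for free.
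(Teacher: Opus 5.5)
Your proof is correct and matches the paper's argument: you restrict to the dense $G_\delta$ set $\mathcal{R}=\bigcap_{n\geq1}(\mathcal{S}_n\setminus Z_n)$ from Theorem~\ref{thm:main}, on which the spectrum is simple, so the Fattorini--Hautus criterion reduces approximate controllability exactly to $\int_\Omega\varphi_n(\Omega)\neq0$ for all $n$. Your extra remark, that a scalar control cannot work when some eigenvalue is multiple, is accurate but not needed for the corollary.
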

\begin{proof}
    Let $\Omega \in \mathcal{R} = \cap_{n\geq 1} (\mathcal{S}_n \setminus Z_n)$, which is residual by the proof of Theorem~\ref{thm:main}. By definition the spectrum of $A_\Omega$ is simple and $\int_\Omega \varphi_n(\Omega) \neq 0$ for all $n \geq 1$, that is $\scalp{B_\Omega}{\varphi_n(\Omega)}_{L^2(\Omega)} \neq 0$ for all $n$. By the Fattorini--Hautus criterion, $(A_\Omega, B_\Omega)$ is approximately controllable, which concludes.
\end{proof}

Coming back to \eqref{eq:heat-control}, Corollary~\ref{cor:control} states that, for a generic domain,
 a single scalar control acting uniformly on $\Omega$ suffices to steer the heat equation approximately between any two $L^2$ states.
This implies that the scalar control acts nontrivially on
every eigenmode. For parabolic systems, this spectral nondegeneracy can be
upgraded to rapid stabilization by the $F$-equivalence method developed in
\cite{BoulardHayat2025}: for any prescribed decay rate, one constructs a
feedback that modifies the finitely many modes that decay too slowly, while
leaving the already sufficiently stable high-frequency modes unchanged. This
yields the following consequence.
\begin{corollary}[Generic rapid stabilization]
\label{cor:rapid-stabilization}
Let $D\subset\R^d$ be a bounded connected open set of class $C^m$, with
$m\geq 3$ and $d\geq 2$. For a residual set of domains
$\Omega\in\OO^m(D)$, the following property holds: for every prescribed
decay rate $\omega>0$, there exists a bounded linear feedback
$K_{\Omega,\omega}\in\mathcal L(L^2(\Omega),\R)$
such that the closed-loop system
\[
\begin{cases}
\partial_t z
=
\Delta z
+
K_{\Omega,\omega}(z)\,\mathbf 1_\Omega,
& \text{in }(0,+\infty)\times\Omega,\\
z=0,
& \text{on }(0,+\infty)\times\partial\Omega,\\
z(0)=z_0,
& \text{in }\Omega,
\end{cases}
\]
is well posed and satisfies
\[
\|z(t)\|_{L^2(\Omega)}
\leq
C_{\Omega,\omega}e^{-\omega t}
\|z_0\|_{L^2(\Omega)},
\qquad t\geq 0,
\]
for some constant $C_{\Omega,\omega}>0$ independent of $z_0$.
In particular, the Dirichlet heat equation is generically rapidly
stabilizable by a single spatially homogeneous scalar control.
\end{corollary}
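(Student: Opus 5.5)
We work on the residual set $\mathcal{R} = \bigcap_{n\geq1}(\mathcal{S}_n\setminus Z_n)$ from the proof of Theorem~\ref{thm:main}. We fix $\Omega\in\mathcal{R}$ and $\omega>0$, and construct the feedback by a finite-dimensional pole-placement argument on the slow modes. Since $\lambda_n\to\infty$, let $N$ be the number of indices with $\lambda_n \leq \omega$ (plus a margin, say all $n$ with $\lambda_n < \omega+1$). We split $L^2(\Omega)=H_N\oplus H_N^\perp$, where $H_N=\mathrm{span}(\varphi_1,\dots,\varphi_N)$. Both subspaces are invariant under $A_\Omega$, and we write $P_N$ for the orthogonal projection onto $H_N$. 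We look for the feedback in the form $K(z)=k\cdot(\scalp{z}{\varphi_1},\dots,\scalp{z}{\varphi_N})$, with $k\in\R^N$. It is bounded on $L^2(\Omega)$ and sees only the low modes.

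The first step is finite-dimensional. Writing $b_n := \int_\Omega\varphi_n$, the projected system is
\[
\dot y = \Lambda y + f\,b,
\]
where $\Lambda=\mathrm{diag}(-\lambda_1,\dots,-\lambda_N)$ and $b=(b_1,\dots,b_N)$. Since the $\lambda_n$ are distinct (by simplicity) and every $b_n\neq0$, the Kalman matrix $(b,\Lambda b,\dots,\Lambda^{N-1}b)$ is $\mathrm{diag}(b)$ times a Vandermonde matrix in distinct nodes, hence invertible. By the pole-placement theorem we can choose $k$ so that $\Lambda+bk^{\mathsf T}$ has prescribed distinct eigenvalues with real parts $<-\omega$. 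This gives $|y(t)|\leq C e^{-\omega t}|y(0)|$.

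The second step handles the coupling with the high modes. The closed-loop generator is $A_\Omega+\mathbf{1}_\Omega K$, a bounded perturbation of $A_\Omega$, so well-posedness is immediate. In the decomposition $H_N\oplus H_N^\perp$, the generator is block upper-triangular. The high component satisfies
\[
\dot w = A_\Omega w + K(z)(I-P_N)\mathbf{1}_\Omega,
\]
and $K(z)=k\cdot y$ depends only on $y$. Hence $y$ evolves autonomously and decays at rate $\omega$ (strictly faster, given the margin). The component $w$ is driven by the heat semigroup restricted to $H_N^\perp$, which has decay rate $\lambda_{N+1}>\omega$, plus a source term bounded by $Ce^{-\omega' t}\|z_0\|$ with $\omega'>\omega$. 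Duhamel's formula then gives $\|w(t)\|\leq C e^{-\omega t}\|z_0\|$. Combining the two components yields the estimate of Corollary~\ref{cor:rapid-stabilization}.

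The main point to get right is the triangular structure. The feedback must depend only on $P_N z$, so that the closed-loop system stays block-triangular. I also need strict inequalities in the decay rates, with pole real parts below $-\omega$ and $\lambda_{N+1}>\omega$, to avoid the polynomial factor $te^{-\omega t}$ in the Duhamel convolution. Otherwise the argument is routine. Alternatively, one could cite the $F$-equivalence result of \cite{BoulardHayat2025} directly, with hypotheses simple spectrum and $\scalp{B_\Omega}{\varphi_n}\neq0$, both supplied by $\Omega\in\mathcal{R}$.
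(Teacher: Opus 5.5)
Your proof is correct, but it follows a different route from the paper. The paper gives no construction. It observes that on $\mathcal{R}$ the spectrum is simple and $\int_\Omega\varphi_n\neq0$ for all $n$, and then cites the $F$-equivalence results of \cite{BoulardHayat2025} (Theorem~3.2 and Proposition~3.4 there) for both the feedback and the decay estimate.

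You instead carry out the classical modal argument by hand:
\begin{itemize}
\item Kalman rank via $\mathrm{diag}(b)$ times a Vandermonde matrix in the distinct nodes $-\lambda_1,\dots,-\lambda_N$;
\item pole placement with a feedback that depends only on $P_Nz$;
\item the resulting triangular structure, with a Duhamel estimate on $H_N^\perp$. The strict gaps (poles to the left of $-\omega$, and $\lambda_{N+1}>\omega$) remove the $te^{-\omega t}$ factor.
\end{itemize}

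Both approaches share the same philosophy of acting only on the finitely many slow modes. Yours is self-contained. It also shows that, for a fixed $\omega$, only finitely many of the conditions defining $\mathcal{R}$ are used: simplicity of $\lambda_1,\dots,\lambda_N$ and $\int_\Omega\varphi_n\neq0$ for $n\le N$. On the other hand, it relies on $\mathbf{1}_\Omega$ being bounded on $L^2(\Omega)$, both for well-posedness and for the trivial bound on the source term $(k\cdot y)(I-P_N)\mathbf{1}_\Omega$. Transferring it to the unbounded boundary control of Remark~\ref{rem:boundary-control} would therefore require additional admissibility estimates for the tail. The paper's citation is shorter and reduces everything to a purely spectral hypothesis, checked once on $\mathcal{R}$.
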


\begin{proof}
By Theorem~\ref{thm:main}, generically the Dirichlet spectrum is simple and
$
\int_\Omega\varphi_n(\Omega)\neq 0$ for every $n\geq1$.
The scalar control operator $B_\Omega$
therefore satisfies the spectral admissibility condition required in
\cite[Theorem~3.2]{BoulardHayat2025} for every prescribed rate
$\omega>0$. The existence of the feedback and the stated decay estimate
then follow directly from
\cite[Proposition~3.4]{BoulardHayat2025}.
\end{proof}

\subsection{Homogeneous boundary control}

\begin{remark}\label{rem:boundary-control}
{The same argument applies, \emph{mutatis mutandis}, to a scalar control imposed uniformly on the boundary. We simply have to work with the state space $H^{-1}(\Omega)$ instead of $L^2(\Omega)$, in order to ensure the admissibility of the control operator.} Consider
\begin{equation}\label{eq:heat-boundary}
\begin{cases}
\partial_t z = \Delta z & \text{in } \Omega, \\
z = c(t) & \text{on } \partial \Omega, \\
z(0) = z_0 & \text{in } \Omega,
\end{cases}
\end{equation}
with $c \in L^2(0,T)$ the same scalar prescribed along all of $\partial\Omega$. Multiplying the first line by $\varphi \in C^\infty(\overline{\Omega})$ with $\varphi_{|\partial\Omega} \equiv 0$ and integrating by parts identifies the control operator $B_\Omega$, now defined by
\begin{equation*}
    \scalp{B_\Omega}{\varphi}_{D(A_\Omega)',D(A_\Omega)} := -\int_{\partial\Omega} \frac{\partial \varphi}{\partial \nu}.
\end{equation*}
This functional extends continuously to
$H^{\frac32+\varepsilon}(\Omega)\cap H_0^1(\Omega)$ for every
$\varepsilon>0$. Equivalently,
$B_\Omega\in
D ((-A_\Omega)^{3/4+\varepsilon})'$
where $A_\Omega$ denotes the Dirichlet Laplacian on $L^2(\Omega)$.
Viewing the system on the state space $H^{-1}(\Omega)$ shifts the
fractional scale by one half, so that $B_\Omega$ has degree of
unboundedness at most $1/4+\varepsilon$. In particular, its degree
of unboundedness is strictly smaller than $1/2$, hence
$B_\Omega$ is an admissible control operator (see, e.g.,
\cite[Chapter~5]{Trelat2024}), and the Fattorini--Hautus criterion
for unbounded control operators \cite{BadraTakahashi2014}
applies.

For an eigenfunction, Green's formula gives
\begin{equation*}
    \scalp{B_\Omega}{\varphi_n(\Omega)}_{D(A_\Omega)',D(A_\Omega)}
    = -\int_{\partial\Omega} \partial_\nu \varphi_n(\Omega)
    = \lambda_n \int_\Omega \varphi_n(\Omega),
\end{equation*}
and since $\lambda_n > 0$ the controllability condition is again $\int_\Omega \varphi_n(\Omega) \neq 0$ for all $n$. Consequently the conclusion of Corollary~\ref{cor:control} holds verbatim for \eqref{eq:heat-boundary}: a generic domain is approximately controllable, and {rapidly stabilizable}, by a spatially homogeneous boundary control as well.

\medskip

{It is precisely in this boundary-control setting that the contrast with Chitour, Coron and Garavello \cite{ChitourCoronGaravello2006} is sharpest. For the same scalar actuation imposed uniformly on $\partial\Omega$, they prove that for a generic domain in dimension $d\geq2$ the heat equation is \emph{not} steady-state controllable. This is a stronger, different property from approximate controllability. Thus the two statements are complementary rather than contradictory: steady-state controllability generically fails, whereas the weaker approximate controllability property generically holds. Together they delineate which controllability notions survive for a spatially homogeneous boundary control on a generic domain.}
\end{remark}

\section*{Acknowledgements}
The author would like to thank Noé Blassel for his advice on shape derivatives, Thomas Borsoni for his questions and Jean-Michel Coron for his time. This work was supported by the ANR-Tremplin StarPDE (ANR-24-ERCS-0010).

\small
\bibliographystyle{abbrv}
\bibliography{references}

\end{document}